\newcommand*\circled[1]{\tikz[baseline=(char.base)]{
            \node[shape=circle,draw,inner sep=4pt] (char) {#1};}}
\theoremstyle{definition}
\newtheorem{definition}{Definition}
\newtheorem{example}[definition]{Example}
\newtheorem{remark}[definition]{Remark}
\theoremstyle{plain}
\newtheorem{lemma}[definition]{Lemma}
\newtheorem{theorem}[definition]{Theorem}
\newtheorem{corollary}[definition]{Corollary}
\newcommand\A{{\mathbf A}}
\newcommand\B{{\mathbf B}}
\newcommand\C{{\mathbf C}}
\newcommand\Fm{\mathbf{Fm}}
\newcommand\two{\mathbf B_2}
\newcommand\WK{{\mathbf{WK}}}
\newcommand\M{\mathrm{\mathbf{M}_4}}
\newcommand\CL{\ensuremath{\mathrm{CL}}\xspace}
\newcommand\PWK{\ensuremath{\mathrm{PWK}}\xspace}
\newcommand\pwk{{\mathsf{PWK}}}
\bmdefine{\boldstar}{\mathchoice{\textstyle*}{\textstyle*}{\textstyle*}{\scriptstyle*}}
\newcommand\Modstar{\mathsf{Mod}^{\boldstar}}
\newcommand\Algstar{\mathsf{Alg}^{\boldstar}}
\newcommand{\ModS}{\mathsf{Mod}^{\textup{Su}}}
\newcommand\Alg[1]{\if#1*\operatorname{\mathsf{Alg}*}\else\operatorname{\mathsf{Alg}}#1\fi}
\newcommand\Mod[1]{\if#1*\operatorname{\mathsf{Mod}*}\else\operatorname{\mathsf{Mod}}#1\fi}
\bmdefine{\Leibniz}{\Omega}        
\bmdefine{\frege}{\Lambda}         
\newcommand{\tarskidsp}{\mathord%
   {\m@th\raisebox{0pt}[0pt][0pt]{$\stackrel%
   {\raisebox{-2.7pt}[0ex][0pt]{$\displaystyle \,\?\thicksim$}}%
   {\displaystyle\Leibniz}$}}}
\newcommand{\tarskitxt}{\mathord%
   {\m@th\raisebox{0pt}[0pt][0pt]{$\stackrel%
   {\raisebox{-2.7pt}[0ex][0pt]{$\,\?\thicksim$}}{\displaystyle\Leibniz}$}}}
\newcommand{\tarskiscr}{\mathord%
   {{\m@th\raisebox{0pt}[0pt][0pt]{$\stackrel%
   {\raisebox{-2.4pt}[0ex][0pt]{$\scriptstyle \,\?\thicksim$}}%
   {\scriptstyle\Leibniz}$}}}}
\newcommand{\tarskiscrscr}{\mathord%
   {{\m@th\raisebox{0pt}[0pt][0pt]{$\stackrel%
   {\raisebox{-2pt}[0ex][0pt]{$\scriptscriptstyle \,\?\thicksim$}}%
   {\scriptscriptstyle\Leibniz}$}}}}
\newcommand{\Tarski}{\@ifnextchar ^ %
   {\mathchoice{\tarskidsp\kern-.07em}{\tarskitxt\kern-.07em}%
   {\tarskiscr\kern-.07em}{\tarskiscrscr\kern-.07em}}%
   {\mathchoice{\tarskidsp}{\tarskitxt}{\tarskiscr}{\tarskiscrscr}}}
\DeclareMathAlphabet{\mathbfsf}{\encodingdefault}{\sfdefault}{bx}n
\providecommand*{\Dashv}{\mathrel{\mathpalette\@Dashv\vDash}}
\newcommand*{\@Dashv}[2]{\reflectbox{$\m@th#1#2$}}
\newcommand\pair[1]{{\langle#1\rangle}}
\newcommand{\FFi}{\mathcal{F}i}
\newcommand\PL{{\mathcal{P}}_{\text{\l}}}
\newcommand\ant{\nicefrac12}
\newcommand{\bit}{\begin{itemize}}    
\newcommand{\eit}{\end{itemize}}
\newcommand{\ben}{\begin{enumerate}}
\newcommand{\een}{\end{enumerate}}
\newcommand{\benroman}{\ben[\normalfont (i)]}  
\let\eroman\een
\newcommand{\bde}{\begin{description}}
\newcommand{\ede}{\end{description}}
\newcommand{\Var}{\mathnormal{V\mkern-.8\thinmuskip ar}} 
\newcommand{\?}{\ensuremath{\mkern0.4\thinmuskip}}   
\newcommand{\sineq}{\mathrel{\dashv\mkern1.5mu\vdash}}  
\begin{document}

\title[Containment logics]{Containment logics: algebraic completeness and axiomatization}
\subjclass[2010]{Primary: 03G27. Secondary: 03G25.}

\keywords{containment logic, P\l onka sums, abstract algebraic logic, non-classical logics.}
\date{}

\author{Stefano Bonzio}
\address{University of Turin, Italy}
\email{stefano.bonzio@gmail.com}
\author{Michele Pra Baldi}
\address{University of Cagliari, Italy}
\maketitle


%
%

\begin{abstract}

The paper studies the containment companion of a logic $\vdash$. This consists of the consequence relation $\vdash^{r}$ which satisfies all the inferences of $\vdash$, where the variables of the conclusion are \emph{contained} into those of the (set of) premises. In accordance with the work started in \cite{BonzioMorascoPrabaldi}, we show that a different generalization of the  P\l onka sum construction, adapted from algebras to logical matrices, allows us to provide a matrix-based semantics for containment logics. In particular, we provide an appropriate completeness theorem for a wide family of containment logics, and we show how to produce a complete Hilbert style axiomatization.
\end{abstract}

\section{introduction}

It is a recent discovery (see \cite{Bonzio16}) that the algebraic counterparts of weak Kleene logics are formed by a regularized variety, whose members coincide with the \emph{P\l onka sum} of Boolean algebras, the algebraic semantics of propositional classical logic. Subsequently, the abstract construction of the P\l onka sum of algebras has been generalized to logical matrices \cite{BonzioMorascoPrabaldi}. The main outcome is that the suggested notion provides an algebra-based semantics for a class of propositional logics, called \emph{logics of left variable inclusion}, of which paraconsistent weak Kleene represents the most prominent example. 

The logics in the weak Kleene ``family'' -- essentially, Bochvar \cite{BochvarBergmann} and paraconsistent weak Kleene \cite{Hallden} --  are syntactically characterized by imposing certain limitations on the inclusions of variables to classical propositional logic \cite{Urquhart2001,CiuniCarrara}.   

The extension of the construction of P\l onka sums to logical matrices, introduced in \cite{BonzioMorascoPrabaldi}, allows for an insightful investigation into the algebraic features of those logics, where the inclusion of variables runs from premises to conclusions. However, this is just one side of the coin of the \emph{logics of variable inclusion}; the other side consisting of those logics verifying inferences in which the variables occurring in the conclusion are contained into the ones occurring in the premises. Consequence relations satisfying this feature are usually know as \emph{containment logics}; the syntactic requirement which they share is a strengthen form of what Ferguson \cite{ferguson2017meaning} understands as \emph{Proscriptive Principle}, which also resembles the one defining logics of \emph{analytic containment} \cite{ledda2019algebraic, epstein1990semantic}.

The most famous example of containment logic is \emph{Bochvar logic} $\mathsf{B_{3}}$ \cite{BochvarBergmann}, defined by a single matrix which features the presence of an infectious truth-value (a peculiarity shared by the twin-sister paraconsistent weak Kleene). $\mathsf{B_{3}}$ has been successfully applied in different contexts: avoiding paradoxes in set-theory \cite{BochvarBergmann}, modeling computer programs affected by errors \cite{Ferguson} and non-sensical information databases \cite{Ciuni1}, capturing the notion of truth in relations with on/off topic arguments \cite{beall2016off}.

The main condition defining containment logics mirrors the syntactic requirement defining logics of left variable inclusion. For this reason, the present paper may be understood as an ideal continuation of the path started in \cite{BonzioMorascoPrabaldi}.
This amounts to answer the very natural question on whether it is possible to build a new generalization of the P\l onka sum construction, suitable for obtaining a matrix semantics for containment logics. Due to the intrinsic difference between the  variable inclusion constraints at stake, the solution is not self-evident. 

The paper is structured as follows.

In Section \ref{sec: preliminari}, we recall all the preliminary notions needed to go trough the reading of the whole paper. They basically consist of the basic notions of abstract algebraic logic and of the theory of P\l onka sums.
 
In Section \ref{sec:compl}, we formally introduce containment logics. By providing an adequate notion of P\l onka sum for logical matrices, we obtain soundness and completeness for the \emph{containment companion} $\vdash^{r}$ of an arbitrary (finitary) logic $\vdash$, with respect to the P\l onka sum of the matrix models of $\vdash$.

In Section \ref{sec:hilbert}, we focus on a specific (though very wide) class of logics, namely those  possessing a binary term called partition function (a property shared by the vast majority of known logics). We provide a method for obtaining an Hilbert-style axiomatization for a logic $\vdash^{r}$ (Theorem \ref{th: completezza calcolo Hilbert}) out of an axiomatization for (a finitary) logic $\vdash$. 

Finally, in Section \ref{sec: examples}, we put at work our machinery and characterize the axiomatization of containment companions of some well-known logics, namely of classical propositional logic, Belnap-Dunn and the Logic of Paradox.


%
%
\section{Preliminaries}\label{sec: preliminari}


For standard background on universal algebra and abstract algebraic logic, we refer the reader, respectively, to \cite{Be11g,BuSa00} and \cite{Font16}. In this paper, algebraic languages are assumed not to contain constant symbols. 
Moreover, unless stated otherwise, we work within a fixed but arbitrary algebraic language. We denote algebras by $\A, \B, \C\dots$ respectively with universes $A, B, C \dots$. 
Let $\Fm$ be the algebra of formulas built up over a countably infinite set $\Var$ of variables (which we indicate by $x,y,z,\dots$). Given a formula $\varphi\in Fm$, we denote by $\Var(\varphi)$ the set of variables really occurring in $\varphi$. Similarly, given $\Gamma\subseteq Fm$, we set
\[
\Var(\Gamma)=\bigcup \{\Var(\gamma)\colon \gamma\in\Gamma\}.
\]
A \emph{logic} is a substitution invariant consequence relation $\vdash \?\? \subseteq \mathcal{P}(Fm) \times Fm$ meaning that for every substitution $\sigma \colon \Fm \to \Fm$,
\[
\text{if }\Gamma \vdash \varphi \text{, then }\sigma [\Gamma] \vdash \sigma (\varphi).
\]
Given formulas $\varphi, \psi$, we write $\varphi \sineq \psi$ as a shorthand for $\varphi \vdash \psi$ and $\psi \vdash \varphi$. 
A logic $\vdash$ is \emph{finitary} when for all $\Gamma\cup\varphi\subseteq Fm$:
\begin{align*}
\Gamma\vdash\varphi \Longleftrightarrow \exists \Delta \subseteq\Gamma \text{ such that } \Delta \text{ is finite and } \Delta\vdash\varphi.
\end{align*}

 A \emph{matrix} is a pair $\langle \A, F\rangle$ where $\A$ is an algebra and $F \subseteq A$. In this case, $\A$ is called the \textit{algebraic reduct} of the matrix $\langle \A, F \rangle$. 
 
 
 Every class of matrices $\mathsf{M}$ defines a logic as follows:
\begin{align*}
\Gamma \vdash_{\mathsf{M}} \varphi \Longleftrightarrow& \text{ for every }\langle \A, F \rangle \in \mathsf{M} \text{ and homomorphism }h \colon \Fm \to \A,\\
& \text{ if }h[\Gamma] \subseteq F\text{, then }h(\varphi) \in F.
\end{align*}
We say that a logic $\vdash$ is \emph{complete} with respect to a class of matrices $\mathsf{M}$ when  $\vdash_{\mathsf{M}}\;=\;\vdash$.
Sometimes, we will refer to such homomorphisms $h$ as \emph{evaluations}.

 A matrix $\langle \A, F\rangle$ is a \emph{model} of a logic $\vdash$ when
\begin{align*}
\text{if }\Gamma \vdash \varphi, &\text{ then for every homomorphism }h \colon \Fm \to \A,\\  
&\text{ if }h[\Gamma] \subseteq F\text{, then }h(\varphi) \in F.
\end{align*}
A set $F \subseteq A$ is a (deductive) \textit{filter} of $\vdash$ on $\A$, or simply a $\vdash$-\textit{filter}, when the matrix $\langle \A, F \rangle$ is a model of $\vdash$. We denote by $\FFi_{\vdash}\A$ the set of all filters of $\vdash$ on $\A$.

 Although the present paper does not address the study of reduced models (for containment logics), in order to make it self-contained, we recall those notions, concerning reduced models, that will be used. Let $\A$ be an algebra and $F \subseteq A$. A congruence $\theta$ of $\A$ is \emph{compatible} with $F$ when for every $a,b\in A$,
\[
\text{if }a\in F\text{ and }\langle a, b \rangle \in \theta\text{, then }b\in F.
\]
The largest congruence of $\A$ which is compatible with $F$ always exists, and is called the \emph{Leibniz congruence} of $F$ on $\A$. It is denoted by $\Leibniz^{\A}F$. 
The \emph{Suszko congruence} of $F$ on $\A$, is defined as
\[
\Tarski^{\A}_{\vdash}F \coloneqq \bigcap \{ \Leibniz^{\A}G : F \subseteq G \text{ and }G \in \FFi_{\vdash}\A \}.
\]

The Leibniz and Suszko congruences allow to single out a distinguished class of models of logics. More precisely, given a logic $\vdash$, we set
\begin{align*}
\Mod(\vdash) & \coloneqq \{ \langle \A, F \rangle : \langle \A, F \rangle \text{ is a model of }\vdash \};\\
\Modstar(\vdash) & \coloneqq \{ \langle \A, F \rangle \in \Mod(\vdash) : \Leibniz^{\A}F \text{ is the identity} \};\\
\ModS(\vdash) & \coloneqq \{ \langle \A, F \rangle \in \Mod(\vdash) : \Tarski_{\vdash}^{\A}F \text{ is the identity} \}.
\end{align*}
The above classes of matrices are called, respectively, the classes of \text{models}, \textit{Leibniz reduced models} (or, simply reduced models), and \textit{Suszko reduced models} of $\vdash$. 

Given a logic $\vdash$, we set
\[
\Algstar(\vdash) = \{ \A : \text{ there is }F \subseteq A \text{ s.t. }\langle \A, F \rangle \in \Modstar(\vdash) \},    \text{ and}
\]
\[
\Alg(\vdash) = \{ \A : \text{ there is }F \subseteq A \text{ s.t. }\langle \A, F \rangle \in \ModS(\vdash) \}.
\]

$\Alg(\vdash)$ is the class of algebraic reducts of matrices in $\ModS(\vdash)$. 
The class $\Alg(\vdash)$ is called the \textit{algebraic counterpart} of $\vdash$ as, for the vast majority of logics $\vdash$, $\Alg(\vdash)$  is the class of algebras intuitively associated with $\vdash$.


Trivial matrices have a central role in the whole paper. We say that a matrix $\langle \A, F \rangle$ is \textit{trivial} if $F = A$. We denote by $\langle \boldsymbol{1}, \{ 1 \} \rangle$ the trivial matrix, whose algebraic reduct $\boldsymbol{1}$ is the trivial algebra.
Observe that the latter matrix is a model (resp. reduced, Suszko reduced model) of every logic. Moreover, if $\vdash$ is a logic and $\langle \A, F\rangle \in \ModS(\vdash)$ is a trivial matrix, then $\langle \A, F\rangle = \langle \boldsymbol{1}, \{ 1 \}\rangle$.

A set of models of a logic $\vdash$ is said to be non trivial, if it does not contain trivial matrices. We indicate by $\Mod_{+}(\vdash)$ the set of non trivial models of a logic $\vdash$.

\subsection*{P\l onka sums}

As standard references on P\l onka sums we mention \cite{Plo67,Plo67a,Romanowska92}. A \textit{semilattice} is an algebra $\A = \langle A, \lor\rangle$, where $\lor$ is a binary associative, commutative and idempotent operation. Given a semilattice $\A$ and $a, b \in A$, we set
\[
a \leq b \Longleftrightarrow a \lor b = b.
\]
It is easy to see that $\leq$ is a partial order on $A$.
\begin{definition}\label{Def:Directed system of algebras}
A \textit{direct system of algebras} consists of: 
\benroman
\item a semilattice $I = \langle I, \lor\rangle$;
\item a family of similar algebras $\{ \A_{i} : i \in I \}$ with pair-wise disjoint universes;
\item a homomorphism $f_{ij} \colon \A_{i} \to \A_{j}$, for every $i, j \in I$ such that $i \leq j$.
\eroman
Moreover, $f_{ii}$ is the identity map for every $i \in I$, and $f_{ik} = f_{jk} \circ f_{ij}$, for $i \leq j \leq k$.
\end{definition}

Let $X$ be a direct system of algebras as defined above. The \textit{P\l onka sum} of $X$, in symbols $\PL(X)$ or $\PL(\A_{i})_{i \in I}$\footnote{When no confusion shall occur, we will write $\PL(\A_{i})$ instead of $\PL(\A_{i})_{i\in I}$.}, is the algebra in the same type defined as follows: the universe of $\PL(\A_{i})_{i \in I}$ is the union $\bigcup_{i \in I}A_{i}$. Moreover, for every $n$-ary basic operation $g$ and $a_{1}, \dots, a_{n} \in \bigcup_{i \in I}A_{i}$, we set
\[
g^{\PL(\A_{i})_{i \in I}}(a_{1}, \dots, a_{n}) \coloneqq g^{\A_{j}}(f_{i_{1} j}(a_{1}), \dots, f_{i_{n} j}(a_{n})),
\]
where $a_{1} \in A_{i_{1}}, \dots, a_{n} \in A_{i_{n}}$ and $j = i_{1} \lor \dots \lor i_{n}$.\ 

Observe that if in the above display we replace $g$ by any complex formula $\varphi$ in $n$-variables, we still have that
\[
\varphi^{\PL(\A_{i})_{i \in I}}(a_{1}, \dots, a_{n}) = \varphi^{\A_{j}}(f_{i_{1} j}(a_{1}), \dots, f_{i_{n} j}(a_{n})).
\]
\noindent \textbf{Notation:} Given a formula $\varphi$, we will often write $\varphi^{\PL}$ instead of $\varphi^{\PL(\A_{i})_{i \in I}}$ when no confusion shall occur. 

\vspace{5pt}

The theory of P\l onka sums is strictly related with a special kind of binary operation, called partition function.

\begin{definition}\label{def: partition function}
Let $\A$ be an algebra of type $\nu$. A function $\cdot\colon A^2\to A$ is a \emph{partition function} in $\A$ if the following conditions are satisfied for all $a,b,c\in A$, $ a_1 , ..., a_n\in A $ and for any operation $g\in\nu$ of arity $n\geqslant 1$.
\begin{enumerate}[label=\textbf{P\arabic*}., leftmargin=*]
\item $a\cdot a = a$;
\item $a\cdot (b\cdot c) = (a\cdot b) \cdot c $;
\item $a\cdot (b\cdot c) = a\cdot (c\cdot b)$;
\item $g(a_1,\dots,a_n)\cdot b = g(a_1\cdot b,\dots, a_n\cdot b)$;
\item $b\cdot g(a_1,\dots,a_n) = b\cdot a_{1}\cdot_{\dots}\cdot a_n $.
\end{enumerate}
\end{definition}

Different definitions of partition function appeared in literature. We adopted the one which uses the minimal number of defining conditions (see \cite{Romanowska92}). 

The next result underlines the connection between P\l onka sums and partition functions:

\begin{theorem}\cite[Thm.~II]{Plo67}\label{th: Teorema di Plonka}
Let $\A$ be an algebra of type $\nu$ with a partition function $\cdot$. The following conditions hold:
\begin{enumerate}
\item $A$ can be partitioned into $\{ A_{i} : i \in I \}$ where any two elements $a, b \in A$ belong to the same component $A_{i}$ exactly when
\[
a= a\cdot b \text{ and }b = b\cdot a.
\]
Moreover, every $A_{i}$ is the universe of a subalgebra $\A_{i}$ of $\A$.
\item The relation $\leq$ on $I$ given by the rule
\[
i \leq j \Longleftrightarrow \text{ there exist }a \in A_{i}, b \in A_{j} \text{ s.t. } b\cdot a =b
\]
is a semilattice order. 
\item For all $i,j\in I$ such that $i\leq j$ and $b \in A_{j}$, the map $f_{ij} \colon A_{i}\to A_{j}$, defined by the rule $f_{ij}(x)= x\cdot b$ is a homomorphism. The definition of $f_{ij}$ is independent from the choice of $b$, since $a\cdot b = a\cdot c$, for all $a\in A_i$ and $c\in A_j$.
\item $Y = \langle \langle I, \leq \rangle, \{ \A_{i} \}_{i \in I}, \{ f_{ij} \! : \! i \leq j \}\rangle$ is a direct system of algebras such that $\PL(Y)=\A$.
\end{enumerate}
\end{theorem}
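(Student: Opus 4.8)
The plan is to reconstruct the direct system $Y$ directly from the partition function $\cdot$, funnelling all the work into a handful of routine consequences of \textbf{P1}--\textbf{P5}. First I would introduce the relation $a\equiv b$ iff $a=a\cdot b$ and $b=b\cdot a$, and check it is an equivalence relation: reflexivity is \textbf{P1}, symmetry is built in, and transitivity is a two-line computation with \textbf{P2} (from $a=a\cdot b$, $b=b\cdot c$, $b=b\cdot a$, $c=c\cdot b$ one gets $a\cdot c=(a\cdot b)\cdot c=a\cdot(b\cdot c)=a\cdot b=a$, and dually $c\cdot a=c$). Write $[a]$ for the $\equiv$-class of $a$, put $I\coloneqq A/{\equiv}$, and let $A_i$ be the classes; they are pairwise disjoint by construction. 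For the subalgebra claim in item~(1) it suffices, for a basic $g$ of arity $n$ and $a_1,\dots,a_n\in A_i$, to show $g(a_1,\dots,a_n)\equiv a_1$: \textbf{P4} gives $g(a_1,\dots,a_n)\cdot a_1=g(a_1\cdot a_1,\dots,a_n\cdot a_1)=g(a_1,\dots,a_n)$ (using $a_k\cdot a_1=a_k$), and \textbf{P5} with \textbf{P1} gives $a_1\cdot g(a_1,\dots,a_n)=a_1\cdot a_1\cdot_{\dots}\cdot a_n=a_1$ (using $a_1\cdot a_k=a_1$); since the language has no constants, this covers every basic operation.

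For item~(2), I would first record that $i\leq j$ does not depend on the representatives: if $b_0\cdot a_0=b_0$ with $a_0\in A_i$, $b_0\in A_j$, then for arbitrary $a\in A_i$ one has $b_0\cdot a=(b_0\cdot a_0)\cdot a=b_0\cdot(a_0\cdot a)=b_0\cdot a_0=b_0$ by \textbf{P2} and $a_0\cdot a=a_0$, and then $b\cdot a=(b\cdot b_0)\cdot a=b\cdot(b_0\cdot a)=b\cdot b_0=b$ for arbitrary $b\in A_j$; so $b\cdot a=b$ holds for all such $a,b$. Reflexivity of $\leq$ is \textbf{P1}, antisymmetry is precisely the definition of $\equiv$, and transitivity follows from one use of \textbf{P2}. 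Least upper bounds exist and are given by $[a]\lor[b]=[a\cdot b]$: that $[a],[b]\leq[a\cdot b]$ comes from $(a\cdot b)\cdot a=a\cdot a\cdot b=a\cdot b$ and $(a\cdot b)\cdot b=a\cdot(b\cdot b)=a\cdot b$ (using \textbf{P1}--\textbf{P3}), while if $e\in A_l$ with $e\cdot a=e=e\cdot b$ then $e\cdot(a\cdot b)=(e\cdot a)\cdot b=e$ by \textbf{P2}, giving minimality; in passing, $a\cdot b\equiv b\cdot a$. Hence $\langle I,\lor\rangle$ is a semilattice.

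For item~(3), set $f_{ij}(x)\coloneqq x\cdot b$ for $b\in A_j$. It lands in $A_j$ because $x\cdot b\equiv b$ ($(x\cdot b)\cdot b=x\cdot b$ by \textbf{P1}--\textbf{P2}, and $b\cdot(x\cdot b)=(b\cdot x)\cdot b=b\cdot b=b$ using $i\leq j$). It is independent of the choice of $b$ since $a\cdot b=a\cdot(b\cdot c)=a\cdot(c\cdot b)=a\cdot c$ for $b,c\in A_j$ (by \textbf{P3} and $b\equiv c$). That $f_{ij}$ is a homomorphism is a direct reading of \textbf{P4} together with the fact that $A_j$ is a subalgebra; $f_{ii}$ is the identity by \textbf{P1} (take $b=x$); and for $i\leq j\leq k$ one has $(x\cdot b)\cdot c=x\cdot(b\cdot c)$ by \textbf{P2}, and since $b\cdot c\in A_k$, independence gives $f_{jk}\circ f_{ij}=f_{ik}$. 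Items~(1)--(3) together say that $Y=\langle\langle I,\leq\rangle,\{\A_i\}_{i\in I},\{f_{ij}\}\rangle$ is a direct system of algebras.

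It remains to prove $\PL(Y)=\A$, which is item~(4); the universes already coincide (the $A_i$ are the $\equiv$-classes of $A$), so everything comes down to the homogeneity claim $[g(a_1,\dots,a_n)]=[a_1]\lor\dots\lor[a_n]=:j$ for every basic $g$. Granting it, for $b\in A_j$ we obtain $g^{\PL(Y)}(a_1,\dots,a_n)=g^{\A_j}(f_{i_1 j}(a_1),\dots,f_{i_n j}(a_n))=g(a_1\cdot b,\dots,a_n\cdot b)=g(a_1,\dots,a_n)\cdot b=g(a_1,\dots,a_n)$, the last equality because $g(a_1,\dots,a_n)\equiv b$. The inclusion $[g(a_1,\dots,a_n)]\leq j$ is easy: taking $d\coloneqq a_1\cdot_{\dots}\cdot a_n\in A_j$ (its class is $j$ by the join formula), \textbf{P5} gives $d\cdot g(a_1,\dots,a_n)=d\cdot a_1\cdot_{\dots}\cdot a_n=d$ since $d\cdot a_k=d$ for each $k$. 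The reverse inclusion is the one genuine obstacle, and I expect it to turn on the following trick: instantiating \textbf{P5} at $b=g(a_1,\dots,a_n)$ and using \textbf{P1} yields $g(a_1,\dots,a_n)=g(a_1,\dots,a_n)\cdot a_1\cdot_{\dots}\cdot a_n$; then, for each fixed $k$, \textbf{P2} and \textbf{P3} let one reorder the factors to the right so that a second copy of $a_k$ sits next to the first, and \textbf{P1} absorbs it, giving $g(a_1,\dots,a_n)\cdot a_k=g(a_1,\dots,a_n)$, i.e.\ $i_k\leq[g(a_1,\dots,a_n)]$, whence $j\leq[g(a_1,\dots,a_n)]$. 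All the difficulty is concentrated in this last homogeneity statement; the rest is bookkeeping with \textbf{P1}--\textbf{P5}.
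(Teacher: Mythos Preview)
The paper does not prove this theorem: it is quoted from P\l onka's original work \cite{Plo67} as a preliminary result and no proof is supplied. There is therefore nothing in the paper to compare your argument against.

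That said, your reconstruction is correct and is essentially the classical proof. The equivalence relation, the subalgebra check, the semilattice structure on $I$ via $[a]\lor[b]=[a\cdot b]$, the well-definedness and homomorphism property of the $f_{ij}$, and the composition law all go through exactly as you outline. The only point worth a remark is the homogeneity step $[g(a_1,\dots,a_n)]=i_1\lor\dots\lor i_n$: your ``trick'' of instantiating \textbf{P5} at $b=g(a_1,\dots,a_n)$, combined with the observation that \textbf{P2}--\textbf{P3} make the tail $b\cdot a_1\cdot_{\dots}\cdot a_n$ invariant under permutation and \textbf{P1}--\textbf{P2} give $(b\cdot a)\cdot a=b\cdot a$, is exactly the standard device, and it closes the argument cleanly. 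Nothing is missing.
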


The statement of Theorem \ref{th: Teorema di Plonka} displayed above relies on the assumption that the algebraic language contains no constant symbols\footnote{When considering types containing constants, then additional conditions should be added to the definition of partition function. This results into a decomposition over a semilattice having a least element: constants of the P\l onka sum will belong to the algebra whose index is the least element. For further details, see \cite{plonka1984sum}.}. 
It is worth remarking that the construction of Plonka sums preserves the validity of \textit{regular identities}, i.e. identities of the form $ \varphi \thickapprox \psi $ such that $\Var(\varphi) = \Var(\psi)$.

\section{Algebraic completeness}\label{sec:compl}

The usual presentations of Kleene three-valued logics divide them into two families, depending on the meaning given to the connectives $\land,\lor$: \emph{strong logics} -- including Strong Kleene and the Logic of Paradox \cite{Priestfirst} -- and \emph{weak logics} -- Bochvar logic ($\mathrm{B_3}$) and paraconsistent weak Kleene (or Halld\'en logic). Logics in each family differentiate upon the choice of the truth-set: $\{1\}$ in Strong Kleene and Bochvar, $\{1,\ant \}$ in the logic of Paradox and paraconsistent weak Kleene.

Bochvar \cite{BochvarBergmann} is the logic induced by the matrix $\pair{\WK,\{1\}}$ of the so-called weak Kleene tables\footnote{In accordance with \cite{Urquhart2001}, here we are actually considering Bochvar ``internal calculus'', which is only one of the two logics introduced in \cite{BochvarBergmann}. The ``external calculus'' consists of a linguistic extension of the weak Kleene tables, with a connective $\mathrm{t}$, interpreted (for every evaluation $h$) as $h(\mathrm{t}\varphi)= 1$ if and only if $h(\varphi)=1$ (for further details, see \cite{BochvarBergmann,KarpenkoTomova,FinnGrigolia}).} displayed in Figure \ref{fig: tavole weak}. 

\vspace{5pt}
\begin{figure}[h]
\begin{center}\renewcommand{\arraystretch}{1.25}
\begin{tabular}{>{$}c<{$}|>{$}c<{$}>{$}c<{$}>{$}c<{$}}
   \land & 0 & \ant & 1 \\[.2ex]
 \hline
       0 & 0 & \ant & 0 \\
       \ant & \ant & \ant & \ant \\          
       1 & 0 & \ant & 1
\end{tabular}
\qquad
\begin{tabular}{>{$}c<{$}|>{$}c<{$}>{$}c<{$}>{$}c<{$}}
   \lor & 0 & \ant & 1 \\[.2ex]
 \hline
     0 & 0 & \ant & 1 \\
     \ant & \ant & \ant & \ant \\          
     1 & 1 & \ant & 1
\end{tabular}
\qquad
\begin{tabular}{>{$}c<{$}|>{$}c<{$}}
  \lnot &  \\[.2ex]
\hline
  1 & 0 \\
  \ant & \ant \\
  0 & 1 \\
\end{tabular}

\caption{The algebra $\mathbf{WK}$ of weak Kleene tables.}\label{fig: tavole weak}
\end{center}
\end{figure}
\vspace{10pt}
\noindent
It is not difficult to check that the algebra $\mathbf{WK}$ is the P\l onka sum of the two-element Boolean algebra $\two$ and the trivial (Boolean) algebra $\mathbf{\ant}$ (over the index set given by the two-element semilattice)\footnote{We refer the reader interested in further details directly to \cite{Bonzio16}.}.

Bochvar logic can be equivalently presented as follows:

\begin{theorem}\cite[Theorem 2.3.1]{Urquhart2001}\label{teorema: Urquhart}
The following are equivalent: 
\begin{enumerate}
\item $\Gamma\vdash_{\mathrm{B_3}}\varphi$;
\item $\Gamma\vdash_{\CL}\varphi $ with $ \Var(\varphi)\subseteq\Var(\Gamma)$ or $\Gamma$ is an inconsistent set of formulas.  
\end{enumerate}
\end{theorem}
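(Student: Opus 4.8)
The plan is to exploit the decomposition $\WK = \PL(\two, \mathbf{\ant})$ over the two-element semilattice recalled just after Figure \ref{fig: tavole weak}, together with the ``infectiousness'' of the value $\ant$, so that everything reduces to the completeness of $\CL$ with respect to $\two$. The crucial preliminary observation is the following \emph{infection lemma}: for every homomorphism $h\colon\Fm\to\WK$ and every formula $\psi$,
\begin{enumerate}[label=(\alph*)]
\item if $h(x)=\ant$ for some $x\in\Var(\psi)$, then $h(\psi)=\ant$;
\item if $h(x)\neq\ant$ for all $x\in\Var(\psi)$, then $h[\Var(\psi)]\subseteq\{0,1\}$ and $h(\psi)$ equals the value computed in the subalgebra $\two$ of $\WK$.
\end{enumerate}
Both parts are proved by a routine induction on $\psi$, unwinding the definition of the basic operations of a P\l onka sum (Theorem \ref{th: Teorema di Plonka}): the $\ant$-component is the top of the semilattice and is trivial, so mixing in one argument from it forces the output into it, while an all-$\two$ tuple stays in $\two$ because $\two$ is (the component at the bottom, hence) a subalgebra of $\WK$.

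For the implication (2)$\Rightarrow$(1), fix a homomorphism $h\colon\Fm\to\WK$ with $h[\Gamma]\subseteq\{1\}$. By part (a) no variable occurring in any $\gamma\in\Gamma$ can receive the value $\ant$, so $h$ restricted to $\Var(\Gamma)$ is, through the inclusion $\two\hookrightarrow\WK$, a classical valuation $v$ with $v[\Gamma]\subseteq\{1\}$. If $\Gamma$ is classically inconsistent this is impossible, hence there is no such $h$ and $\Gamma\vdash_{\mathrm{B_3}}\varphi$ holds vacuously. Otherwise $\Gamma\vdash_{\CL}\varphi$ and $\Var(\varphi)\subseteq\Var(\Gamma)$, so every variable of $\varphi$ also gets a value in $\{0,1\}$; by part (b), $h(\varphi)=v(\varphi)$, and this equals $1$ by soundness of $\CL$ with respect to $\two$.

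For (1)$\Rightarrow$(2) I would argue contrapositively: assume $\Gamma$ is classically consistent and that either $\Gamma\nvdash_{\CL}\varphi$ or $\Var(\varphi)\not\subseteq\Var(\Gamma)$, and build a countermodel in $\pair{\WK,\{1\}}$. In the first case take a classical valuation $v$ with $v[\Gamma]\subseteq\{1\}$ and $v(\varphi)=0$ and read it as a homomorphism into $\two\subseteq\WK$; then $h[\Gamma]\subseteq\{1\}$ while $h(\varphi)=0$. In the second case pick $y\in\Var(\varphi)\setminus\Var(\Gamma)$ and a classical valuation $v$ with $v[\Gamma]\subseteq\{1\}$ (which exists by consistency), and define $h$ to agree with $v$ on $\Var(\Gamma)$ and to send every other variable, in particular $y$, to $\ant$. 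By part (b) each $\gamma\in\Gamma$ still evaluates to $1$, while by part (a) the occurrence of $y$ in $\varphi$ forces $h(\varphi)=\ant\neq 1$. In either case $\Gamma\nvdash_{\mathrm{B_3}}\varphi$, as required.

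The only genuinely delicate points are getting the P\l onka-sum bookkeeping in the infection lemma exactly right (which component sits at the bottom of the semilattice, and why an all-classical tuple is evaluated inside $\two$), and the edge cases forced by the absence of constants: since every formula contains at least one variable, $\Var(\varphi)=\emptyset$ never occurs and $\mathrm{B_3}$ has no theorems, so there is no spurious mismatch between (1) and (2) when $\Gamma=\emptyset$. Everything else is bookkeeping layered on top of the classical completeness theorem for $\two$.
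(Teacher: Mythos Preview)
The paper does not give its own proof of this theorem: it is quoted as \cite[Theorem 2.3.1]{Urquhart2001} and used only to motivate Definition~\ref{def: containment companion}, so strictly speaking there is nothing to compare against. Your argument is correct and is in fact the two-element instance of the machinery the paper develops later: the ``infection lemma'' is exactly the observation that $\WK$ is the P\l onka sum of $\two$ and the trivial algebra over the two-element semilattice, your (2)$\Rightarrow$(1) direction is the content of Lemma~\ref{lemma: soundness} specialised to this r-direct system, and your countermodel constructions in (1)$\Rightarrow$(2) are the two branches of the $(\supseteq)$ half of Theorem~\ref{completeness} (the first branch takes the single matrix $\pair{\two,\{1\}}$, the second adjoins the empty-filter matrix $\pair{\mathbf{\ant},\emptyset}$). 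So what you have written is a sound direct proof, and it is methodologically the same as the paper's general completeness argument, just carried out by hand for the concrete matrix $\pair{\WK,\{1\}}$ rather than deduced from it.
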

 
In words, Bochvar logic is the consequence relation obtained out of classical logics, imposing the constraint that variables of the conclusion (formula) shall be included into those of the (set of) premises. For this reason, $\mathrm{B_{3}}$ is often referred to as a \emph{containment logic}, see for e.g. \cite{ferguson2017meaning,Parry}.

The following definition originates in \cite{ThomasLavtesi}, but see also \cite{CampercholiRaftery,JGR13}.

\begin{definition}\label{def inconsistency terms}
A set of formulas $\Sigma$ is an antitheorem of a logic $\vdash$ if
$\sigma[\Sigma] \vdash \varphi$, for every substitution $\sigma:\Fm\to\Fm$ and formula $\varphi$.
\end{definition}

Observe that, if the set $\Sigma(y_{1},\dots,y_{n})$, where the variables $y_{1},\dots,y_{n}$ really occur, is an antitheorem for $\vdash$, then, by substitution, also $\Sigma(x)$ (where only $x$ occurs) is an antitheorem for $\vdash$. In other words, if a logic $\vdash$ possesses an antitheorem $\Sigma$, then it possesses an antitheorem in one variable only and we will write $\Sigma(x)$.  

The most intuitive example one can keep in mind is the following: for any formula $\varphi$, the set $\{\varphi,\neg\varphi\}$ is an antitheorem of Intuitionistic, Classical and both local and global modal logics.


The above characterization of Bochvar logic suggests that a logic $\vdash^{r}$ satisfying an analogous criterion on the inclusion of variables as that of Theorem \ref{teorema: Urquhart} can be associated to any arbitrary logic $\vdash$. 

\begin{definition}\label{def: containment companion}
Let $\vdash$ be a logic. $\vdash^{r}$ is the logic defined as 

\[
\Gamma\vdash^{r}\varphi\iff \left\{ \begin{array}{ll}
\Gamma\vdash\varphi  \ \text{and} \ \Var(\varphi)\subseteq\Var(\Gamma)& \text{or}\\
\Sigma(x)\subseteq\Gamma, & \\
  \end{array} \right.  
\]

where $\Sigma(x)$ is an antitheorem of $\vdash$.
\end{definition}

We will refer to $\vdash^{r}$ as the \emph{containment companion} of the logic $\vdash$. It follows from the definition that $\vdash^{r}$ and $\vdash$ have the same antitheorems. 
Bochvar logic is not the unique example of containment logic that can be found in literature. Indeed, the four-valued logic $\mathrm{\mathbf{S}_{fde}}$, introduced by Deutsch \cite{Deutsch} (see also \cite{BelikovPetrukhin, Szmuch2016, ciunilast}), can be counted as the containment companion of the Logic of Paradox (this follows from our analysis, see Subsection \ref{Subsec: relevance}). Also one of the four-valued logics introduced by Tomova (see Example \ref{ex: containment di PWK}) is a containment logic, more precisely the containment companion of \PWK.

The containment companion ($\vdash^{r}$) of a logic $\vdash$ is, somehow, ``opposite'' to the logic $\vdash_{l}$ considered in \cite{BonzioMorascoPrabaldi}, as the inclusion of variables works from conclusions to premises,  namely from ``right to left'', and not vice-versa (as for $\vdash_{l}$).
\begin{lemma}\label{lemma: finitarieta di r}
Let $\vdash$ be a finitary logic. Then $\vdash^{r}$ is finitary.
\end{lemma}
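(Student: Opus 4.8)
The plan is to prove that $\Gamma\vdash^{r}\varphi$ implies the existence of a finite $\Delta\subseteq\Gamma$ with $\Delta\vdash^{r}\varphi$, arguing by cases according to which disjunct of Definition \ref{def: containment companion} is responsible for $\Gamma\vdash^{r}\varphi$.

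First I would treat the easy case: suppose $\Sigma(x)\subseteq\Gamma$, where $\Sigma(x)$ is an antitheorem of $\vdash$. If $\Sigma(x)$ is itself finite, take $\Delta=\Sigma(x)$ and we are done. If $\Sigma(x)$ is infinite, we need a finite antitheorem. Since $\vdash$ is finitary, from $\Sigma(x)\vdash\psi$ (for any fixed $\psi$, say $\psi=y$) we extract a finite $\Sigma_{0}(x)\subseteq\Sigma(x)$ with $\Sigma_{0}(x)\vdash y$; by substitution invariance $\sigma[\Sigma_{0}(x)]\vdash\sigma(\chi)$ for every substitution $\sigma$ and formula $\chi$, so $\Sigma_{0}(x)$ is a finite antitheorem of $\vdash$. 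Then $\Delta\coloneqq\Sigma_{0}(x)\subseteq\Gamma$ is finite and satisfies $\Sigma_{0}(x)\subseteq\Delta$, hence $\Delta\vdash^{r}\varphi$.

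The main case is: $\Gamma\vdash\varphi$ and $\Var(\varphi)\subseteq\Var(\Gamma)$. Finitarity of $\vdash$ gives a finite $\Delta_{0}\subseteq\Gamma$ with $\Delta_{0}\vdash\varphi$. The obstacle is that $\Var(\varphi)\subseteq\Var(\Delta_{0})$ may fail, since $\Var(\Delta_{0})$ can be strictly smaller than $\Var(\Gamma)$. To fix this, enlarge $\Delta_{0}$: for each variable $z\in\Var(\varphi)\setminus\Var(\Delta_{0})$ (a finite set), choose some $\gamma_{z}\in\Gamma$ with $z\in\Var(\gamma_{z})$ — such a $\gamma_{z}$ exists because $\Var(\varphi)\subseteq\Var(\Gamma)$ — and set $\Delta\coloneqq\Delta_{0}\cup\{\gamma_{z}:z\in\Var(\varphi)\setminus\Var(\Delta_{0})\}$. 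Then $\Delta$ is finite, $\Delta\subseteq\Gamma$, we still have $\Delta\vdash\varphi$ by monotonicity, and now $\Var(\varphi)\subseteq\Var(\Delta_{0})\cup(\Var(\varphi)\setminus\Var(\Delta_{0}))\subseteq\Var(\Delta)$. Hence $\Delta\vdash^{r}\varphi$ via the first disjunct.

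In both cases we produced a finite $\Delta\subseteq\Gamma$ with $\Delta\vdash^{r}\varphi$; the converse implication is immediate from monotonicity of $\vdash^{r}$ (which follows directly from Definition \ref{def: containment companion}, since enlarging the premise set preserves both disjuncts and can only enlarge $\Var(\Gamma)$). Therefore $\vdash^{r}$ is finitary. The only genuinely non-routine point is the variable-padding trick in the main case, together with the observation — used in the antitheorem case — that finitarity of $\vdash$ forces any antitheorem to contain a finite antitheorem.
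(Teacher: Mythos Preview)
Your proof is correct and follows essentially the same route as the paper: a case split on the two disjuncts of Definition~\ref{def: containment companion}, with the variable-padding trick (adjoining finitely many premises to cover the missing variables of $\varphi$) in the main case. If anything, your treatment of the antitheorem case is more careful than the paper's, which simply asserts that $\Sigma(x)$ is finite because $\vdash$ is finitary; you spell out how to extract a finite sub-antitheorem via finitarity and substitution invariance.
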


\begin{proof}
Suppose that $\Gamma\vdash^{r}\varphi$. By definition of $\vdash^{r}$, two cases are to be considered: 
\begin{itemize}
\item[i)] $\Gamma\vdash\varphi$ with $\Var(\varphi)\subseteq\Var(\Gamma)$;
\item[ii)] $\Gamma $ contains an antitheorem $\Sigma(x)$ of $\vdash$.
\end{itemize}
i) Clearly, $\mid\Var(\varphi)\mid < \infty$, hence there exist formulas $\gamma_{1},\dots,\gamma_{n}\in \Gamma$ such that $\Var(\varphi)\subseteq\Var(\gamma_{1})\cup\dots\cup\Var(\gamma_{n})$. Since $\vdash$ is finitary, then there exists a finite set $\Gamma'\subseteq\Gamma$ such that $\Gamma'\vdash\varphi$. If $\Var(\varphi)\subseteq\Var(\Gamma')$, then also $\Gamma'\vdash^{r}\varphi$, i.e. $\vdash^{r}$ is finitary. So, suppose that $\Var(\varphi)\not\subseteq\Var(\Gamma')$. Consider $\Delta=\Gamma'\cup\{\gamma_{1},\dots,\gamma_{n}\}$. Obviously, $\Delta$ is finite, $\Var(\varphi)\subseteq\Var(\Delta)$ and $\Delta\subseteq\Gamma$. By monotonicity of $\vdash$, $\Delta\vdash\varphi$, hence $\Delta\vdash^{r}\varphi$, i.e. $\vdash^{r}$ is finitary. \\
\noindent
ii) Since $\Sigma(x)$ is an antitheorem for $\vdash$, then $\Sigma(x)\vdash\varphi$. Hence \\
\noindent
$\Sigma(x)\vdash_{r}\varphi$ and $\Sigma(x)$ is finite (as $\vdash$ is finite). 
\end{proof}

Since the algebra $\WK$ is a P\l onka sum (of Boolean algebras), it makes sense to ask whether the matrix $\pair{\WK,\{1\}}$ can be constructed as P\l onka sum of (two) matrices. To the best of our knowledge, the construction of P\l onka sums between matrices has been developed exclusively in \cite{BonzioMorascoPrabaldi}. However, it is not difficult to check that the mentioned construction, when applied to the matrices $\pair{\two, \{1\}}$ and $\pair{\mathbf{\ant}, \emptyset}$ (where $\two$ and $\mathbf{\ant}$ stand for the two-element Boolean algebra and the trivial algebra, respectively), does not result in  $\pair{\WK,\{1\}}$. This suggests that a different notion of direct system of logical matrices shall be introduced.

\begin{definition}\label{Def:Directed-System-Matrices}
An \textit{r-direct system} of matrices consists of: 
\benroman
\item A semilattice $I = \langle I, \lor\rangle$.
\item A family of matrices $\{ \langle\A_{i},F_{i}\rangle : i \in I \}$ such that \\ $I^{+}\coloneqq\{i\in I: F_{i}\neq\emptyset\} $ is a  sub-semilattice of $I$.
\item a homomorphism $f_{ij} \colon \A_{i} \to \A_{j}$, for every $i, j \in I$ such that $i \leq j$, satisfying also that: 
\begin{itemize}
\item $f_{ii}$ is the identity map, for every $i \in I$;
\item if $i \leq j \leq k$, then $f_{ik} = f_{jk} \circ f_{ij}$;
\item if $F_{j}\neq\emptyset$ then $ f_{ij}^{-1}[F_{j}]= F_{i}$, for any $i\leq j$.
\end{itemize}
\eroman
\end{definition}

As the nomenclature highlights, the above introduced notion of direct system of matrices is essentially different from the one in \cite{BonzioMorascoPrabaldi}. The main difference concerns the interplay between homomorphisms of the system and matrices' filters.

Given a r-direct system of matrices $X$, we define a new matrix as 
\[
\PL(X) \coloneqq \langle \PL(\A_{i})_{i \in I}, \bigcup_{i \in I}F_{i}\rangle.
\]
\vspace{5pt}

\noindent
We will refer to the matrix $\PL(X)$ as the \textit{P\l onka sum} over the r-direct system of matrices $X$. Given a class $\mathsf{M}$ of matrices, $\PL(\mathsf{M})$ will denote the class of all P\l onka sums of r-direct systems of matrices in $\mathsf{M}$. 

Let $h\colon\Fm\to\PL(\A_i)$ be a homomorphism from the formula algebra into a generic P\l onka sum of algebras. Then, for any formula $\varphi\in Fm$, we set 

\[
 i_{h}(\varphi)\coloneqq\bigvee\{i\in I:h(x)\in A_{i}, x\in\Var(\varphi)\}.
 \]
 
 \vspace{2pt}

In words, $i_{h}(\varphi)$ indicates the index where the formula $\varphi$ is interpreted by the homomorphism $h$, into a P\l onka sum. Moreover, for any $\Gamma\subseteq Fm$, 
we set $i_{h}(\Gamma)\coloneqq\bigvee \{i_{h}(x)\colon x\in\Var(\Gamma)\}$. 

\begin{remark}\label{rem: Gamma finito}
Notice that the index $i_{h}(\Gamma)$ is defined provided that the set $\Var(\Gamma)$ is finite. 
For several results, we will assume that the logic $\vdash$ is finite. Hence, by Lemma \ref{lemma: finitarieta di r}, also $\vdash^{r}$ is finitary, and this allows us to consider finite sets $\Gamma\subseteq Fm$, for which the existence of $i_{h}(\Gamma)$ is assured.
Moreover, observe that, for every homomorphism $h\colon\Fm\to\PL(X)$ from the formula algebra into a generic P\l onka sum over an r-direct system of matrices $X$, and every $\Gamma\cup\{\varphi\}\subseteq Fm$, it is immediate to check that $\Var(\varphi)\subseteq\Var(\Gamma)$ implies $i_{h}(\varphi)\leq i_{h}(\Gamma)$. 
\end{remark}

\begin{lemma}\label{lemma: soundness}
Let $X$ be an r-direct system of non trivial models of a finitary logic $\vdash$. Then $\PL(X)=\langle\PL(\A_i),\bigcup_{i\in I}F_i\rangle$ is a model of $\vdash^{r}$.
\end{lemma}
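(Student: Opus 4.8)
The plan is to invoke the finitarity of $\vdash^{r}$ (Lemma \ref{lemma: finitarieta di r}) to reduce the verification to inferences $\Gamma\vdash^{r}\varphi$ with $\Gamma$ finite, and then to split according to the two clauses of Definition \ref{def: containment companion}. Throughout, fix a homomorphism $h\colon\Fm\to\PL(\A_i)_{i\in I}$ with $h[\Gamma]\subseteq\bigcup_{i\in I}F_{i}$, and record the elementary but recurring fact that, as the components of a P\l onka sum are pairwise disjoint and $F_{i}\subseteq A_{i}$, one has $A_{i}\cap\bigcup_{j}F_{j}=F_{i}$ for every $i\in I$; hence any element of $\PL(\A_i)_{i\in I}$ lying both in $A_{i}$ and in $\bigcup_{j}F_{j}$ already lies in $F_{i}$.

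For the first clause ($\Gamma\vdash\varphi$ with $\Var(\varphi)\subseteq\Var(\Gamma)$; note $\Gamma\neq\emptyset$ and every $\gamma\in\Gamma$ contains a variable, since the language has no constants), observe that each $h(\gamma)$ is computed inside $A_{i_{h}(\gamma)}$, so $h(\gamma)\in F_{i_{h}(\gamma)}$ and $i_{h}(\gamma)\in I^{+}$; since $\Gamma$ is finite and $I^{+}$ is a sub-semilattice, $j_{0}\coloneqq i_{h}(\Gamma)=\bigvee_{\gamma\in\Gamma}i_{h}(\gamma)\in I^{+}$, i.e. $F_{j_{0}}\neq\emptyset$. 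I would then set $g\colon\Fm\to\A_{j_{0}}$ by $g(z)=f_{i_{h}(z),j_{0}}(h(z))$ for $z\in\Var(\Gamma)$ (arbitrary elsewhere) and check, using that P\l onka sums evaluate complex formulas through the transition maps together with $f_{jk}\circ f_{ij}=f_{ik}$, that $g(\chi)=f_{i_{h}(\chi),j_{0}}(h(\chi))$ for every $\chi$ with $\Var(\chi)\subseteq\Var(\Gamma)$. The compatibility clause $f_{i_{h}(\gamma),j_{0}}^{-1}[F_{j_{0}}]=F_{i_{h}(\gamma)}$ (available since $F_{j_{0}}\neq\emptyset$) then yields $g(\gamma)\in F_{j_{0}}$ for all $\gamma\in\Gamma$, so $g[\Gamma]\subseteq F_{j_{0}}$; as $\langle\A_{j_{0}},F_{j_{0}}\rangle$ is a model of $\vdash$ and $\Gamma\vdash\varphi$, we get $g(\varphi)\in F_{j_{0}}$, that is $f_{i_{h}(\varphi),j_{0}}(h(\varphi))\in F_{j_{0}}$; using $i_{h}(\varphi)\leq j_{0}$ (Remark \ref{rem: Gamma finito}) and the same compatibility clause once more gives $h(\varphi)\in F_{i_{h}(\varphi)}\subseteq\bigcup_{j}F_{j}$.

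For the second clause ($\Sigma(x)\subseteq\Gamma$ for an antitheorem $\Sigma(x)$ of $\vdash$), I would show the premise $h[\Gamma]\subseteq\bigcup_{j}F_{j}$ is unsatisfiable, so that the condition holds vacuously. Letting $i$ be the index with $h(x)\in A_{i}$, every formula of $\Sigma(x)$ involves only $x$, so $h[\Sigma(x)]\subseteq A_{i}$, whence $h[\Sigma(x)]\subseteq F_{i}$ and $F_{i}\neq\emptyset$. Now for any $a\in A_{i}$ and any variable $y\neq x$, the evaluation $g'\colon\Fm\to\A_{i}$ agreeing with $h$ on $x$ and sending $y$ to $a$ satisfies $g'[\Sigma(x)]=h[\Sigma(x)]\subseteq F_{i}$; since $\Sigma(x)\vdash y$ and $\langle\A_{i},F_{i}\rangle$ is a model of $\vdash$, $a=g'(y)\in F_{i}$, forcing $F_{i}=A_{i}$ and contradicting non-triviality. (If $\Sigma=\emptyset$, then $\vdash$ and all of its models are trivial, so no r-direct system of non-trivial models of $\vdash$ exists and the lemma is vacuous.)

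The main obstacle is the first clause, specifically the bookkeeping at the index $j_{0}=i_{h}(\Gamma)$: one must check that $j_{0}\in I^{+}$, so that the filter-compatibility clause of Definition \ref{Def:Directed-System-Matrices} applies both when transporting the premises up into $A_{j_{0}}$ and when pulling the conclusion $h(\varphi)$ back down to $A_{i_{h}(\varphi)}$, and one must verify that the auxiliary evaluation $g$ really computes $h$ followed by the appropriate transition map on every formula in play. The variable-inclusion hypothesis $\Var(\varphi)\subseteq\Var(\Gamma)$ enters precisely to guarantee $i_{h}(\varphi)\leq j_{0}$, which is what legitimises the final pull-back step.
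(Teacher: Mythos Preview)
Your proposal is correct and follows essentially the same approach as the paper's proof: reduce to finite $\Gamma$ via finitarity, split into the two clauses of Definition~\ref{def: containment companion}, handle the antitheorem clause by showing the premise $h[\Sigma(x)]\subseteq F$ forces a fiber to be trivial, and handle the variable-inclusion clause by transporting the evaluation up to the index $j_0=i_h(\Gamma)\in I^{+}$ and then pulling the conclusion back via the filter-compatibility condition. Your write-up is in fact slightly more explicit than the paper's in the antitheorem case and slightly more streamlined in the variable-inclusion case (the paper inserts a separate contradiction argument to show $F_{i_h(\varphi)}\neq\emptyset$, which, as you implicitly observe, is an immediate consequence of $h(\varphi)\in f_{i_h(\varphi),j_0}^{-1}[F_{j_0}]=F_{i_h(\varphi)}$).
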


\begin{proof}

Let $X$ be an r-direct system of non trivial models of $\vdash$. Assume $\Gamma\vdash^{r}\varphi$. Since $\vdash $ is finitary, so it is also $\vdash^{r}$ (by Lemma \ref{lemma: finitarieta di r}), there exists a finite subset $\Delta\subseteq\Gamma $, such that $\Delta\vdash^{r}\varphi$.
We distinguish the following cases:

\begin{itemize}
\item[(a)] $\Sigma(x)\subseteq\Delta$, where $\Sigma(x)$ is an antitheorem of $\vdash$;
\vspace{3pt}
\item[(b)] $\Delta\vdash\varphi$ with $\Var(\varphi)\subseteq\Var(\Delta)$. 
\end{itemize}

\noindent
Since $X$ contains non-trivial models only, the case (a) easily follows by noticing that, for any homomorphism $h\colon \Fm\to\PL(\A_i)$, $h[\Sigma(x)]\not\subseteq F=\bigcup_{i\in I}F_i$.
Therefore $\Sigma(x)\vdash_{\PL(X)}\varphi$, hence also $\Delta\vdash_{\PL(X)}\varphi$. 
\vspace{5pt}

\noindent
Suppose (b) is the case, i.e. $\Delta\vdash\varphi$ with $\Var(\varphi)\subseteq\Var(\Delta)$. Let $h\colon\Fm\to\PL(\A_{i})$ be a homomorphism such that $h[\Delta]\subseteq F$. 
Since $\Delta$ is a finite set, then we can fix $j\coloneqq i_{h}(\Delta)$ and, for any formula $\delta\in \Delta$, we have $h(\delta)\in F_{i_{h}(\delta)}$. This implies that each $i_{h}(\delta)\in I^{+}$ and, as $I^{+}$ forms a  sub-semilattice of $I$, we have that $j\in I^{+}$. 

Now, define $g\colon\Fm\to\A_{j}$ as 
\[
g(x)\coloneqq f_{i_{h}(x)j}\circ h(x),
\]
for every $x\in\Var(\Delta)$. For any $\delta\in \Delta$, we have $g(\delta)=f_{i_{h}(\delta) j}\circ h(\delta)$, hence $g[\Delta]\subseteq F_{j}$. From the fact that $\Delta\vdash\varphi$ and $\langle\A_{j},F_{j}\rangle\in\Mod(\vdash)$, it follows that $g(\varphi)\in F_{j}$. Setting $k\coloneqq  i_{h}(\varphi)$, by Remark \ref{rem: Gamma finito}, we have $k\leq j$ and this, together with the observation that $F_{j}\neq\emptyset$, implies $f_{kj}^{-1}[F_{j}]=F_{k}$. Moreover, we claim that $F_{k}\neq\emptyset$. Suppose, by contradiction, that $F_{k}=\emptyset$. Then, by definition of r-direct system of matrices, we have that $f_{kj}^{-1}[F_{j}]=\emptyset$, that is: there exists no $a\in A_{k}$ such that $f_{kj}(a)\in F_{j}$. On the other hand, since $\Var(\varphi)\subseteq \Var(\Delta)$, then $g(\varphi)=f_{kj}\circ h(\varphi)\in F_{j}$, a contradiction.  

From the fact that $g(\varphi)\in F_{j}$ together with $f_{kj}^{-1}[F_{j}]=F_{k}$, we conclude $h(\varphi)\in F_{k}$. This proves that $h(\varphi)\in F_{k}\subseteq\bigcup_{i\in I}F_{i}$. 
\end{proof}

\begin{remark}\label{rem su inconsistency}

Observe that the assumption on the non-triviality of models of the logic $\vdash$ is crucial in Lemma \ref{lemma: soundness}, as witnessed by the following example. Let $\vdash$ be a theoremless logic possessing an anti-theorem $\Sigma(x)$ (an example is the almost inconsistent logic). 
 Set $X=\pair{\A\oplus \mathbf{1}, A}$ to be the r-direct system of models of $\vdash$, consisting of the two algebras $\mathbf{A}$ and $\mathbf{1}$ with the unique homomorphism $f\colon\A\to\mathbf{1}$ (plus the identity homomorphisms).
Then $\Sigma(x)\vdash y$, for an arbitrary variable $y$, and therefore $\Sigma(x)\vdash^{r} y$. However, $\PL(X)$ is not a model of the latter inference (consider, for instance, an evaluation $v\colon\Fm\to \PL(\A\oplus \mathbf{1})$ such that $v(x)=a\in A$ and $v(y)=1$). 
\end{remark}

Observe that, if the logic $\vdash$ does not possess an antitheorem, then the following holds:
\begin{corollary}
Let $X$ be an r-direct system of models of a finitary logic  $\vdash$ possessing no antitheorems. Then $\PL(X)$ is a model of $\vdash^{r}$.
\end{corollary}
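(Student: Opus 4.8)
The plan is to deduce this corollary from Lemma~\ref{lemma: soundness} by reducing the hypothesis ``$X$ is an r-direct system of models of $\vdash$'' to the stronger hypothesis ``$X$ is an r-direct system of \emph{non-trivial} models of $\vdash$'' that the lemma requires, using the extra assumption that $\vdash$ has no antitheorems. The key observation is that if $\langle \A_i, F_i\rangle$ is a trivial model of $\vdash$, i.e.\ $F_i = A_i$, then $\A_i$ must be the trivial algebra $\boldsymbol{1}$: since $\vdash$ has no antitheorems, the only trivial matrix that is a Suszko-reduced model is $\langle\boldsymbol{1},\{1\}\rangle$, but more to the point, I would argue directly that a one-point filter equal to the whole algebra forces triviality. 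Actually the cleanest route is: a trivial matrix $\langle\A,A\rangle$ is a model of $\vdash$ always, so this does not force $\A=\boldsymbol{1}$ on its own; instead I should handle trivial components by a different mechanism.

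First I would set up notation: let $X = \langle \langle I,\lor\rangle, \{\langle\A_i,F_i\rangle\}_{i\in I}, \{f_{ij}\}\rangle$ be the given r-direct system of models of $\vdash$, and let $I^{+} = \{i \in I : F_i \neq \emptyset\}$. Suppose $\Gamma \vdash^{r} \varphi$; since $\vdash$ has no antitheorems and $\vdash^{r}$ has the same antitheorems (as noted after Definition~\ref{def: containment companion}), the antitheorem clause in the definition of $\vdash^{r}$ is vacuous, so we are in the case $\Gamma \vdash \varphi$ with $\Var(\varphi)\subseteq\Var(\Gamma)$. Then I would essentially replay the proof of case~(b) of Lemma~\ref{lemma: soundness}: by finitarity of $\vdash$ (hence of $\vdash^{r}$, by Lemma~\ref{lemma: finitarieta di r}) pass to a finite $\Delta\subseteq\Gamma$ with $\Delta\vdash\varphi$ and $\Var(\varphi)\subseteq\Var(\Delta)$; fix a homomorphism $h\colon\Fm\to\PL(\A_i)$ with $h[\Delta]\subseteq F=\bigcup_i F_i$; set $j := i_h(\Delta)$, observe each $i_h(\delta)\in I^{+}$, so $j\in I^{+}$ as $I^{+}$ is a subsemilattice; define $g\colon\Fm\to\A_j$ by $g(x):=f_{i_h(x)j}\circ h(x)$, so $g[\Delta]\subseteq F_j$; conclude $g(\varphi)\in F_j$ since $\langle\A_j,F_j\rangle\in\Mod(\vdash)$; set $k:=i_h(\varphi)\leq j$, use $F_j\neq\emptyset$ to get $f_{kj}^{-1}[F_j]=F_k$, show $F_k\neq\emptyset$ exactly as in the lemma, and conclude $h(\varphi)\in F_k\subseteq F$. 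The only point where the proof of Lemma~\ref{lemma: soundness} invoked non-triviality was in disposing of the antitheorem case~(a), which here does not arise, so the argument goes through verbatim for case~(b).

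The main obstacle, or rather the only subtlety to be careful about, is to verify that nothing in case~(b) of the proof of Lemma~\ref{lemma: soundness} secretly used non-triviality of the component matrices: scanning it, it does not --- it only uses that each $\langle\A_i,F_i\rangle$ is a model of $\vdash$, that $I^{+}$ is a subsemilattice, and the r-direct-system condition $f_{ij}^{-1}[F_j]=F_i$ when $F_j\neq\emptyset$. So the honest statement of the proof is simply: the antitheorem clause is vacuous when $\vdash$ has no antitheorems, and what remains is exactly case~(b), which holds for arbitrary r-direct systems of models. Thus the corollary follows.

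\begin{proof}
Since $\vdash$ possesses no antitheorems and $\vdash^{r}$ has the same antitheorems as $\vdash$ (see the comment following Definition~\ref{def: containment companion}), $\vdash^{r}$ possesses no antitheorems either. Hence, in the defining clause of $\vdash^{r}$ (Definition~\ref{def: containment companion}), the second alternative ``$\Sigma(x)\subseteq\Gamma$'' never applies, and $\Gamma\vdash^{r}\varphi$ holds if and only if $\Gamma\vdash\varphi$ and $\Var(\varphi)\subseteq\Var(\Gamma)$.

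Assume now $\Gamma\vdash^{r}\varphi$, so $\Gamma\vdash\varphi$ with $\Var(\varphi)\subseteq\Var(\Gamma)$. Since $\vdash$ is finitary, so is $\vdash^{r}$ by Lemma~\ref{lemma: finitarieta di r}, whence there is a finite $\Delta\subseteq\Gamma$ with $\Delta\vdash^{r}\varphi$, i.e.\ $\Delta\vdash\varphi$ and $\Var(\varphi)\subseteq\Var(\Delta)$. This is precisely situation (b) in the proof of Lemma~\ref{lemma: soundness}, and the argument given there for case~(b) does not use the non-triviality of the matrices $\langle\A_i,F_i\rangle$: it only uses that each $\langle\A_i,F_i\rangle$ is a model of $\vdash$, that $I^{+}\coloneqq\{i\in I: F_i\neq\emptyset\}$ is a sub-semilattice of $I$, and that $f_{ij}^{-1}[F_j]=F_i$ whenever $F_j\neq\emptyset$ and $i\leq j$. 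Repeating that argument verbatim, for every homomorphism $h\colon\Fm\to\PL(\A_i)$ with $h[\Delta]\subseteq\bigcup_{i\in I}F_i$ we obtain $h(\varphi)\in F_{i_h(\varphi)}\subseteq\bigcup_{i\in I}F_i$. Therefore $\Delta\vdash_{\PL(X)}\varphi$, and by monotonicity $\Gamma\vdash_{\PL(X)}\varphi$. Hence $\PL(X)$ is a model of $\vdash^{r}$.
\end{proof}
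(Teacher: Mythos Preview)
Your proposal is correct and takes essentially the same approach as the paper, which states the corollary without proof as an immediate consequence of the observation that case~(a) in the proof of Lemma~\ref{lemma: soundness} is vacuous when $\vdash$ has no antitheorems, while case~(b) never invokes non-triviality. Your explicit verification that case~(b) uses only the model hypothesis, the sub-semilattice property of $I^{+}$, and the filter-preimage condition is exactly the justification the paper leaves implicit.
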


Given a logic $\vdash$ which is complete with respect to a class $\mathsf{M}$ of matrices, we set  $\mathsf{M^{\emptyset}}\coloneqq\mathsf{M}\cup\langle\A,\emptyset\rangle$, for any arbitrary $\A\in\Alg(\vdash)$.


\begin{theorem}\label{completeness}
Let $\vdash$ be a finitary logic which is complete with respect to a class of non trivial matrices $\mathsf{M}$. Then $\vdash^{r}$ is complete with respect to $\PL(\mathsf{M^{\emptyset}})$. 
\end{theorem}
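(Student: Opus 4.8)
The plan is to prove the two inclusions ${\vdash^{r}}\subseteq{\vdash_{\PL(\mathsf{M^{\emptyset}})}}$ (soundness) and ${\vdash_{\PL(\mathsf{M^{\emptyset}})}}\subseteq{\vdash^{r}}$ (completeness) separately; together they give $\vdash_{\PL(\mathsf{M^{\emptyset}})}\,=\,\vdash^{r}$. Soundness is essentially Lemma~\ref{lemma: soundness}: every matrix of $\mathsf{M^{\emptyset}}$ is non-trivial (those in $\mathsf{M}$ by hypothesis, the matrices $\langle\A,\emptyset\rangle$ because their reduct has a non-empty universe), and those with non-empty filter all lie in $\mathsf{M}$, hence are models of $\vdash$; since the proof of Lemma~\ref{lemma: soundness} appeals only to the components indexed by $I^{+}$ being models of $\vdash$ and to the non-triviality of all components, it applies to every r-direct system $X$ over $\mathsf{M^{\emptyset}}$, yielding that $\PL(X)$ is a model of $\vdash^{r}$.

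For completeness I argue contrapositively: assuming $\Gamma\nvdash^{r}\varphi$, I exhibit a matrix in $\PL(\mathsf{M^{\emptyset}})$ with an evaluation refuting $\Gamma\vdash\varphi$ (such a refutation refutes $\Gamma\vdash^{r}\varphi$ too). If $\Gamma\nvdash\varphi$, then by completeness of $\vdash$ with respect to $\mathsf{M}$ there are $\langle\A,F\rangle\in\mathsf{M}\subseteq\PL(\mathsf{M^{\emptyset}})$ (a single matrix being trivially a P\l onka sum) and $h$ with $h[\Gamma]\subseteq F$, $h(\varphi)\notin F$, and we are done. The substantive case is $\Gamma\vdash\varphi$ with $\Var(\varphi)\nsubseteq\Var(\Gamma)$ and with no antitheorem of $\vdash$ contained in $\Gamma$. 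Here I would first observe that $\Var(\Gamma)\neq\Var$ (as $\Var(\varphi)\nsubseteq\Var(\Gamma)$ while $\Var(\varphi)\subseteq\Var$), pick a variable $z\in\Var\setminus\Var(\Gamma)$, and prove that $\Gamma$ must be satisfiable in $\mathsf{M}$: if it were not, then $\Gamma\vdash_{\mathsf{M}}\psi$ would hold vacuously for every $\psi$, so $\Gamma\vdash\psi$ for every $\psi$ by completeness, in particular $\Gamma\vdash z$; finitarity of $\vdash$ would give a finite $\Gamma_{0}\subseteq\Gamma$ with $\Gamma_{0}\vdash z$, and since $z\notin\Var(\Gamma_{0})$, substitution invariance would make $\Gamma_{0}$ an antitheorem of $\vdash$ contained in $\Gamma$, contradicting our hypothesis. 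So fix $\langle\A_{0},F_{0}\rangle\in\mathsf{M}$ and $h_{0}\colon\Fm\to\A_{0}$ with $h_{0}[\Gamma]\subseteq F_{0}$ (if $\Gamma=\emptyset$ one simply takes any $\langle\A_{0},F_{0}\rangle\in\mathsf{M}$ with $F_{0}\neq\emptyset$).

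The countermodel is then the P\l onka sum $X$ over the two-element chain $I=\{0\leq 1\}$ having $\langle\A_{0},F_{0}\rangle$ at $0$ and a disjoint copy of the trivial matrix $\langle\boldsymbol{1},\emptyset\rangle\in\mathsf{M^{\emptyset}}$ at $1$, with $f_{01}$ the unique homomorphism $\A_{0}\to\boldsymbol{1}$. This is an r-direct system: $I^{+}=\{0\}$ is a sub-semilattice of $I$, and the condition $f_{ij}^{-1}[F_{j}]=F_{i}$ is vacuous when $j=1$ and trivially satisfied when $j=0$; hence $\PL(X)=\langle\PL(\A_{0},\boldsymbol{1}),F_{0}\rangle$ belongs to $\PL(\mathsf{M^{\emptyset}})$. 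Define $h\colon\Fm\to\PL(\A_{0},\boldsymbol{1})$ by $h(x)\coloneqq h_{0}(x)$ for $x\in\Var(\Gamma)$ and $h(x)\coloneqq$ the unique element of the top component for $x\notin\Var(\Gamma)$. Since $\A_{0}$ is a subalgebra of the P\l onka sum, each $\gamma\in\Gamma$ (whose variables all lie in $\Var(\Gamma)$) satisfies $i_{h}(\gamma)=0$ and $h(\gamma)=h_{0}(\gamma)\in F_{0}$, so $h[\Gamma]\subseteq F_{0}=\bigcup_{i}F_{i}$. On the other hand $z\in\Var(\varphi)\setminus\Var(\Gamma)$ is interpreted at index $1$, so $i_{h}(\varphi)=1$ and $h(\varphi)$ is the unique element of the top component, which is not in $F_{1}=\emptyset$; since the $A_{i}$ are pairwise disjoint, $h(\varphi)\notin\bigcup_{i}F_{i}$. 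Thus $\PL(X)$ together with $h$ refutes $\Gamma\vdash^{r}\varphi$, as required.

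The step I expect to be the real obstacle is finding the right r-direct system in the substantive case. The naive choice — $\Gamma$ at the bottom, the ``new'' variables at the top — breaks down whenever some $\gamma\in\Gamma$ has all its variables among those of $\varphi$; what makes the argument work is to keep the genuine model at the bottom and to place the filterless trivial matrix at the top, so that $\varphi$, being interpreted at the join of the indices of its variables, is forced up into the empty-filter component precisely because it carries a variable foreign to $\Gamma$, while each $\gamma\in\Gamma$ stays at the bottom where $h_{0}$ satisfies it. A further delicate point, easy to overlook, is the preliminary reduction showing that in this case $\Gamma$ really is $\vdash$-satisfiable; this is exactly where finitarity of $\vdash$ and the one-variable form of antitheorems are needed.
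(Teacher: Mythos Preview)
Your proof is correct and follows essentially the same approach as the paper: both directions are handled the same way, and the two-element-chain P\l onka sum with a satisfying model of $\vdash$ at the bottom and $\langle\boldsymbol{1},\emptyset\rangle$ at the top is exactly the countermodel construction the paper uses. If anything, you are more careful than the paper in justifying that $\Gamma$ is satisfiable in $\mathsf{M}$; the paper asserts without argument (and unnecessarily) that one can also arrange $v(\varphi)\notin G$, which need not hold when $\Gamma\vdash\varphi$, whereas your reduction via finitarity and the fresh variable $z$ cleanly yields the only thing actually needed, namely $v[\Gamma]\subseteq G$.
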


\begin{proof}

We aim at showing that $\vdash^{r}=\;\vdash_{\PL(\mathsf{M^{\emptyset}})}$. \\
\noindent
($\vdash^{r}\;\subseteq \;\vdash_{\PL(\mathsf{M^{\emptyset}})}$). Consider $\Gamma\vdash^r\varphi$ and a P\l onka sum  $\langle\PL (\A_i),\bigcup_{i\in I}F_{i}\rangle$ of matrices in $\mathsf{M^{\emptyset}}$. Set $\A=\PL(\A_i)$ The cases in which $\Gamma$ is an antitheorem of $\vdash$ or $\langle\A,\emptyset\rangle$ is a model of $\vdash$ follow by Lemma \ref{lemma: soundness}. 

So, assume $\langle\A,\emptyset\rangle$ is not a model of $\vdash$ and that $\Gamma$ is not an antitheorem of $\vdash$. 
Let $h\colon Fm\to \A$ be a homomorphism such that $h[\Gamma]\subseteq \bigcup_{i \in I}F_{i}$. Suppose, in view of a contradiction, that $h(\varphi)\not\in \bigcup_{i \in I}F_{i}$. 
 Set $i_{h}(\varphi) = j$ and $ i_{h}(\Gamma) = k$; since $\Var(\varphi)\subseteq\Var(\Gamma)$ then $j\leq k$, by Remark \ref{rem: Gamma finito}. We define a homomorphism $v\colon \Fm\to \A_k$, as follows
\[
v(x)\coloneqq f_{lk}\circ h (x),
\]
where $l=i_{h}(x)$. Clearly, $v[\Gamma]=f_{kk}\circ h[\Gamma] = h[\Gamma]\subseteq F_{k}$ and $v(\varphi)=f_{jk}\circ h (\varphi)\in A_{k}\smallsetminus F_k$, since $h(\varphi)\in A_{j}\smallsetminus F_{j}$ and $F_{j}=f^{-1}_{jk}[F_k]$ (as we know that $F_k\neq\emptyset$). Therefore, we have $\Gamma\not\vdash \varphi$, which is a contradiction. \\
\noindent
($\vdash_{\PL(\mathsf{M^{\emptyset}})}\;\subseteq\;\vdash^{r}$). By contraposition, we prove that $\Gamma\nvdash^{r}\varphi $ implies $\Gamma\nvdash_{\PL(\mathsf{M^{\emptyset}})}\varphi$. To this end, assume $\Gamma\nvdash^{r}\varphi$. If $\Var(\varphi)\subseteq\Var(\Gamma)$, clearly $\Gamma\nvdash\varphi$. Therefore there exists a matrix $\langle\A_{i},F_{i}\rangle\in\mathsf{M}$ and a homomorphism $h\colon\Fm\to\A_{i}$ such that $h[\Gamma]\subseteq F_i$ and $h(\varphi)\notin F_i$. Upon considering the r-direct system $X=\pair{\langle\A_{i},F_{i}\rangle, \{i\}, id}$ and the homomorphism $h$, we immediately obtain $\Gamma\nvdash_{\PL(\mathsf{M^{\emptyset}})}\varphi$.

The only other case to consider is $\Var(\varphi)\nsubseteq\Var(\Gamma)$. Preliminarily, observe that the assumption $\Gamma\nvdash^{r}\varphi$ implies that $\Gamma$ contains no anti-theorem $\Sigma(x)$ for $\vdash$. Therefore, since  $\mathsf{M}$ is a class of models complete with respect to $\vdash$, there exists a matrix $\langle\B,G\rangle\in\mathsf{M}$ and a homomorphism $v\colon\Fm\to\B$ such that $v[\Gamma]\subseteq G$ and $v(\varphi)\notin G$. Consider any r-direct system formed by the matrices $\langle\B,G\rangle$ and  $\langle \A,\emptyset \rangle$ for an appropriate $\A\in\Alg(\vdash)$  (the choice $\A=\boldsymbol{1}$ is always appropriate), with $\langle\A,\emptyset\rangle$  indexed as top element. Denote by $\B \oplus \A^{\emptyset}$ a P\l onka sum over the r-direct system just described. 

The homomorphism $g\colon\Fm\to\B \oplus \A^{\emptyset}$ defined as
\[
g(x)\coloneqq \left\{ \begin{array}{ll}
v(x) \ \text{if} \ \ x\in\Var(\Gamma),& \\
a \ \text{otherwise.} & \\
  \end{array} \right.  
\]
for arbitrary $a\in A$
\noindent
easily witnesses $\Gamma\nvdash_{\PL(\mathsf{M^{\emptyset}})}\varphi$, as desired.
%
%
\end{proof}

\begin{example}\label{ex: containment di PWK}
Let $\mathbf{K_{4}}=\pair{\{0,1,\ant, n \}, \neg,\land,\vee}$ be the algebra given by the following tables 
\vspace{5pt}
\begin{center}\renewcommand{\arraystretch}{1.25}
\begin{tabular}{>{$}c<{$}|>{$}c<{$}}
  \lnot &  \\[.2ex]
\hline
  1 & 0 \\
  \ant & \ant \\
  n & n \\
  0 & 1 \\
\end{tabular}
\qquad
\begin{tabular}{>{$}c<{$}|>{$}c<{$}>{$}c<{$}>{$}c<{$}>{$}c<{$}}
   \land & 0 & \ant & n & 1 \\[.2ex]
 \hline
       0 & 0 & \ant & n &  0 \\
       \ant & \ant & \ant & n & \ant \\
       n & n & n & n & n \\          
       1 & 0 & \ant & n & 1 
\end{tabular}
\qquad
\begin{tabular}{>{$}c<{$}|>{$}c<{$}>{$}c<{$}>{$}c<{$}>{$}c<{$}}
   \lor & 0 & \ant & n & 1 \\[.2ex]
 \hline
       0 & 0 & \ant & n &  1 \\
       \ant & \ant & \ant & n & \ant \\
       n & n & n & n & n \\          
       1 & 1 & \ant & n & 1 
\end{tabular}

\end{center}
\vspace{10pt}

The logic $\mathbf{K^{w}_{4n}}= \pair{\mathbf{K_4},\{1,\ant\}}$ is included among the four-valued regular logics counted by Tomova (see \cite{Tomova,Petrukhin2}). 
Observe that $\pair{\mathbf{K_4},\{1,\ant\}}$ is the P\l onka sum (over the r-direct system) of the matrices $\pair{\mathbf{WK},\{1,\ant\}}$ and $\pair{\mathbf{n},\emptyset}$. Since \PWK is complete with respect to $\pair{\mathbf{WK},\{1,\ant\}}$, then, it follows by Theorem \ref{completeness}, that $\mathbf{K^{w}_{4n}} =\; \vdash^{r}_{\pwk}$, i.e. $\mathbf{K^{w}_{4n}}$ is the containment companion of \PWK. 
\qed
\end{example}

As \PWK is the \emph{left variable inclusion} companion of classical logic (see \cite{BonzioMorascoPrabaldi}), the above example shows that the constructions yielding the (two) companions (left variable inclusion and containment) can actually be iterated, in alternation, starting from an arbitrary logic $\vdash$ (for further details see \cite{prabaldisub}). 

\begin{remark}
Observe that if $\vdash$ is a logic which is complete with respect to a finite set of finite matrices, then so is $\vdash^r$ (by Theorem \ref{completeness}). This means that the containment companion of a logic $\vdash$ preserves ``finite valuedness'', a notion introduced and studied in \cite{CALEIROfinite}. 
\end{remark}

Theorem \ref{completeness} provides a complete class of matrices for an arbitrary logic of right variable inclusion. This class is obtained performing P\l onka sums over r-direct systems of models of $\vdash$ together with the matrices $\langle \A,\emptyset\rangle$ for any $\A\in\Alg(\vdash)$. Obviously, it is not generally the case that the matrix $\langle \A,\emptyset\rangle $ is a model of a logic $\vdash$. 
For this reason, it is not always true that P\l onka sums over an r-direct systems of models of $\vdash$ provide a complete matrix semantics for $\vdash^{r}$. In this sense, the right variable inclusion companion of a logic is a logic of ``P\l onka sums'' (of matrices) in weaker sense compared to the case of the left variable inclusion companion, fully described in \cite{BonzioMorascoPrabaldi}.
Nonetheless, if $\langle \boldsymbol{1},\emptyset\rangle\in\Mod(\vdash)$, the correspondence between $\vdash^{r}$ and P\l onka sums is fully recovered. This is actually the case of every theoremless logic, such as Strong Kleene Logic, $\vdash_{\CL}^{\land,\lor}$. 

\begin{corollary}\label{cor completezza con incons}
A finitary containment logic $\vdash^{r}$ is complete w.r.t. any of the following classes of matrices:
\begin{center}
$\PL(\Mod_{+}(\vdash)\cup\langle \A,\emptyset\rangle)$, $\PL(\Modstar_{+}(\vdash)\cup\langle \A,\emptyset\rangle)$, $\PL(\ModS_{+}(\vdash)\cup\langle \A,\emptyset\rangle),$
\end{center}
for $\A\in\Alg(\vdash)$.
\end{corollary}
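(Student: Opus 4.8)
The plan is to derive all three completeness statements of Corollary~\ref{cor completezza con incons} from Theorem~\ref{completeness} by checking that each of the classes $\Mod_{+}(\vdash)$, $\Modstar_{+}(\vdash)$, and $\ModS_{+}(\vdash)$ is a \emph{class of non-trivial matrices that is complete with respect to $\vdash$}, and then observing that adding matrices of the form $\langle\A,\emptyset\rangle$ to these classes produces exactly a class of the form $\mathsf{M}^{\emptyset}$ as in the hypothesis of Theorem~\ref{completeness}. First I would recall the standard fact from abstract algebraic logic (see \cite{Font16}) that every logic $\vdash$ is complete with respect to $\Mod(\vdash)$, and in fact with respect to $\Modstar(\vdash)$ and $\ModS(\vdash)$, since passing to the Leibniz (resp.\ Suszko) reduction of a model does not change the inferences it validates. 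The point that needs a short argument is that we may discard the \emph{trivial} matrices from each of these classes without losing completeness: if $\Gamma\nvdash\varphi$ then the separating model $\langle\A,F\rangle$ (reduced or Suszko-reduced as needed) has $h[\Gamma]\subseteq F$ and $h(\varphi)\notin F$, so $F\neq A$, i.e.\ the witnessing matrix is automatically non-trivial. Hence $\Mod_{+}(\vdash)$, $\Modstar_{+}(\vdash)$, $\ModS_{+}(\vdash)$ are each complete for $\vdash$ and consist only of non-trivial matrices.

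Next I would match notation: writing $\mathsf{M}$ for any one of $\Mod_{+}(\vdash)$, $\Modstar_{+}(\vdash)$, $\ModS_{+}(\vdash)$, the class $\PL(\mathsf{M}\cup\langle\A,\emptyset\rangle)$ appearing in the corollary is precisely $\PL(\mathsf{M}^{\emptyset})$ in the sense defined just before Theorem~\ref{completeness}, for the chosen $\A\in\Alg(\vdash)$. Since $\vdash$ is finitary by hypothesis and $\mathsf{M}$ is a complete class of non-trivial matrices, Theorem~\ref{completeness} applies verbatim and yields $\vdash^{r}\;=\;\vdash_{\PL(\mathsf{M}^{\emptyset})}$, which is the desired completeness statement. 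Applying this three times, once for each choice of $\mathsf{M}$, gives all three clauses.

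The only genuine subtlety — and the step I expect to require the most care — is the interaction between the \emph{non-triviality} restriction and the reduced/Suszko-reduced classes, in particular making sure that removing trivial matrices from $\Modstar(\vdash)$ and $\ModS(\vdash)$ is harmless. Here one should use the remark already recorded in the Preliminaries: the only trivial matrix that is a (Leibniz- or Suszko-) reduced model of $\vdash$ is $\langle\boldsymbol{1},\{1\}\rangle$, and as noted above it can never serve as a counterexample to a failed inference, so deleting it preserves completeness. A secondary point worth a sentence is that $\Alg(\vdash)$ is nonempty (it contains $\boldsymbol{1}$), so the matrices $\langle\A,\emptyset\rangle$ referred to in the statement actually exist; and one should note that the particular $\A$ chosen is immaterial, exactly as in the hypothesis of Theorem~\ref{completeness}. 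With these observations in place the corollary follows immediately, so the proof is essentially a bookkeeping reduction to the already-proved Theorem~\ref{completeness}.
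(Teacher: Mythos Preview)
Your proposal is correct and is exactly the intended derivation: the paper states the result as an unproved corollary of Theorem~\ref{completeness}, and your argument---that $\Mod_{+}(\vdash)$, $\Modstar_{+}(\vdash)$, $\ModS_{+}(\vdash)$ are each a complete class of non-trivial matrices for $\vdash$ (since any separating model necessarily has $F\neq A$), so that Theorem~\ref{completeness} applies with $\mathsf{M}^{\emptyset}$ equal to the displayed classes---is precisely the reduction the paper has in mind.
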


\noindent
Moreover, observing that if $\langle \mathbf{1},\emptyset\rangle\in\Mod(\vdash)$ then $\langle \mathbf{1},\emptyset\rangle\in\Modstar(\vdash)$, the following holds

\begin{corollary}
Let $\vdash$ be a finitary logic such that $\langle \mathbf{1},\emptyset\rangle\in\Mod(\vdash)$. Then $\vdash^{r}$ is complete w.r.t. any of the following classes of matrices:
\begin{center}
$\PL(\Mod_{+}(\vdash))$, $\PL(\Modstar_{+}(\vdash))$, $\PL(\ModS_{+}(\vdash)).$
\end{center}
\end{corollary}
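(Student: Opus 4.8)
The plan is to derive this corollary directly from Corollary~\ref{cor completezza con incons} together with the auxiliary observation sitting just above the statement, namely that $\langle\mathbf 1,\emptyset\rangle\in\Mod(\vdash)$ implies $\langle\mathbf 1,\emptyset\rangle\in\Modstar(\vdash)$. The point is that when $\langle\mathbf 1,\emptyset\rangle$ is already a model of $\vdash$, the extra matrices $\langle\A,\emptyset\rangle$ that had to be adjoined in $\mathsf M^{\emptyset}$ are no longer ``foreign'': one may take $\A=\mathbf 1$ and then $\langle\mathbf 1,\emptyset\rangle$ is itself a (reduced, and a fortiori Suszko-reduced) model of $\vdash$, hence lies in each of $\Mod_+(\vdash)$, $\Modstar_+(\vdash)$, $\ModS_+(\vdash)$ (it is non-trivial since its filter is empty, not the whole universe). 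Consequently the three classes appearing in Corollary~\ref{cor completezza con incons} are already contained in $\PL(\Mod_+(\vdash))$, $\PL(\Modstar_+(\vdash))$, $\PL(\ModS_+(\vdash))$ respectively.

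Concretely I would argue as follows. First I would record that $\langle\mathbf 1,\emptyset\rangle$ is non-trivial (its filter $\emptyset\neq\{1\}=A$), so it belongs to $\Mod_+(\vdash)$ by hypothesis; that $\Leibniz^{\mathbf 1}\emptyset$ is the identity on the one-element algebra, so $\langle\mathbf 1,\emptyset\rangle\in\Modstar_+(\vdash)$; and that the identity is also the Suszko congruence there, so $\langle\mathbf 1,\emptyset\rangle\in\ModS_+(\vdash)$. Next, taking $\A=\mathbf 1$ in Corollary~\ref{cor completezza con incons}, the class $\Mod_+(\vdash)\cup\langle\mathbf 1,\emptyset\rangle$ equals $\Mod_+(\vdash)$, and likewise for the starred and Suszko variants; hence $\PL(\Mod_+(\vdash))=\PL(\Mod_+(\vdash)\cup\langle\mathbf 1,\emptyset\rangle)$ and similarly in the other two cases. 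Corollary~\ref{cor completezza con incons} then gives completeness of $\vdash^{r}$ with respect to each of $\PL(\Mod_+(\vdash))$, $\PL(\Modstar_+(\vdash))$, $\PL(\ModS_+(\vdash))$, which is exactly the claim.

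There is essentially no obstacle here: the content of the corollary is a specialization of the previously established completeness result, and the only thing to verify is the bookkeeping that $\langle\mathbf 1,\emptyset\rangle$ genuinely sits inside the non-trivial reduced (and Suszko-reduced) model classes under the stated hypothesis. The one point deserving a line of care is that absorbing $\langle\mathbf 1,\emptyset\rangle$ into, say, $\Modstar_+(\vdash)$ does not change the class of P\l onka sums one can form, which is immediate since $\langle\mathbf 1,\emptyset\rangle$ is literally a member of $\Modstar_+(\vdash)$ once we know it is a reduced model of $\vdash$; so $\PL(\Modstar_+(\vdash)\cup\langle\mathbf 1,\emptyset\rangle)$ and $\PL(\Modstar_+(\vdash))$ are the very same class of matrices. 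Thus the proof reduces to two short sentences invoking Corollary~\ref{cor completezza con incons} and the remark preceding the statement.
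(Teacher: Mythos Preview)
Your proposal is correct and follows essentially the same approach the paper intends: the corollary is stated without proof, with only the observation that $\langle\mathbf 1,\emptyset\rangle\in\Mod(\vdash)$ implies $\langle\mathbf 1,\emptyset\rangle\in\Modstar(\vdash)$ given as a hint, and you have unpacked exactly this---choosing $\A=\mathbf 1$ in Corollary~\ref{cor completezza con incons} and noting that $\langle\mathbf 1,\emptyset\rangle$ then already lies in each of the three non-trivial model classes. The only implicit point you might make explicit is that $\mathbf 1\in\Alg(\vdash)$ always (since $\langle\mathbf 1,\{1\}\rangle\in\ModS(\vdash)$ for every logic), which justifies that choice of $\A$.
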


In case $\vdash$ does not possess anti-theorems, then the above corollaries can be restated as follows

\begin{corollary}\label{cor da usare per calcolo hilbert no inc}
Let $\vdash$ be a finitary logic without antitheorems. Then $\vdash^{r}$ is complete w.r.t. any of the following classes of matrices:
\begin{center}
$\PL(\Mod(\vdash)\cup\langle\A,\emptyset\rangle)$, $\PL(\Modstar(\vdash)\cup \langle\A,\emptyset\rangle)$, $\PL(\ModS(\vdash)\cup\langle\A,\emptyset\rangle),$
\end{center}
for any $\A\in\Alg(\vdash)$.
\end{corollary}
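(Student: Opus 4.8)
The plan is to derive this from Corollary \ref{cor completezza con incons} by showing that when $\vdash$ has no antitheorems, the non-triviality restriction on the model classes becomes harmless, so that $\PL(\Mod_{+}(\vdash)\cup\langle\A,\emptyset\rangle)$ and $\PL(\Mod(\vdash)\cup\langle\A,\emptyset\rangle)$ induce the same consequence relation (and likewise for the $\Modstar$ and $\ModS$ variants). First I would recall that the only trivial matrix in $\ModS(\vdash)$ is $\langle\boldsymbol 1,\{1\}\rangle$, and that when $\vdash$ has no antitheorems this trivial matrix is already subsumed, for the purpose of computing $\vdash_{\PL(\cdot)}$, by the empty matrix $\langle\boldsymbol 1,\emptyset\rangle$ together with the remaining non-trivial models; in fact a P\l onka sum of an r-direct system is unaffected, up to the induced logic, by deleting a trivial component and replacing it by the empty-filter matrix on the same algebra, since both contribute ``no constraint'' in the right way. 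More carefully, I would argue the two directions separately.

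For the inclusion $\vdash_{\PL(\Mod(\vdash)\cup\langle\A,\emptyset\rangle)}\;\subseteq\;\vdash^{r}$: since $\Mod_{+}(\vdash)\cup\langle\A,\emptyset\rangle\subseteq\Mod(\vdash)\cup\langle\A,\emptyset\rangle$, the larger class yields a logic contained in $\vdash_{\PL(\Mod_{+}(\vdash)\cup\langle\A,\emptyset\rangle)}$, which equals $\vdash^{r}$ by Corollary \ref{cor completezza con incons}. Wait — that inclusion goes the wrong way; more matrices give a stronger (smaller) consequence relation only when the matrices themselves are added, but here what matters is $\PL$ of the class. So instead I would prove soundness directly: it suffices to check that every $\PL(X)$ for $X$ an r-direct system over $\Mod(\vdash)\cup\langle\A,\emptyset\rangle$ is a model of $\vdash^{r}$. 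Since $\vdash$ has no antitheorems, $\vdash^{r}$ reduces to the single clause ``$\Gamma\vdash\varphi$ and $\Var(\varphi)\subseteq\Var(\Gamma)$''. The proof of Lemma \ref{lemma: soundness} goes through verbatim once we observe that the only place non-triviality of the components was used was to handle case (a), the antitheorem case, which is now vacuous; case (b) uses only that each $\langle\A_j,F_j\rangle\in\Mod(\vdash)$ and the r-direct system axioms, neither of which needs non-triviality. Hence $\PL(X)\in\Mod(\vdash^{r})$, giving $\vdash^{r}\;\subseteq\;\vdash_{\PL(\Mod(\vdash)\cup\langle\A,\emptyset\rangle)}$, and the same for the starred and Suszko variants since $\Modstar(\vdash),\ModS(\vdash)\subseteq\Mod(\vdash)$.

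For the converse inclusion $\vdash_{\PL(\ModS(\vdash)\cup\langle\A,\emptyset\rangle)}\;\subseteq\;\vdash^{r}$: this is immediate because $\PL(\ModS_{+}(\vdash)\cup\langle\A,\emptyset\rangle)\subseteq\PL(\ModS(\vdash)\cup\langle\A,\emptyset\rangle)$, so the logic induced by the larger class is contained in the one induced by the smaller, which is $\vdash^{r}$ by Corollary \ref{cor completezza con incons}; the same argument applies to the $\Mod$ and $\Modstar$ lines. Combining the two inclusions gives $\vdash_{\PL(\ModS(\vdash)\cup\langle\A,\emptyset\rangle)}=\vdash^{r}$ and likewise for the other two classes. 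The main obstacle, and the only nontrivial point, is the soundness re-run: one must double-check that in the proof of Lemma \ref{lemma: soundness}, case (b), no hidden appeal to non-triviality was made when passing from $h$ to the evaluation $g\colon\Fm\to\A_j$ and when invoking $f_{kj}^{-1}[F_j]=F_k$ — but inspection shows the argument only needs $F_j\neq\emptyset$ (guaranteed by $j\in I^{+}$) and the r-direct system axioms, so the re-run is legitimate.
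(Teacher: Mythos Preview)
Your argument is correct and matches the paper's implicit reasoning. The paper does not spell out a proof of this corollary; it simply introduces it with ``In case $\vdash$ does not possess anti-theorems, then the above corollaries can be restated as follows'', so the intended derivation is exactly the two-step one you give: soundness via the no-antitheorem variant of Lemma~\ref{lemma: soundness} (which the paper in fact records as a separate, unnumbered corollary right after that lemma), and the converse inclusion by monotonicity from Corollary~\ref{cor completezza con incons}, using $\Mod_{+}(\vdash)\subseteq\Mod(\vdash)$ and likewise for the starred and Suszko variants.

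One point you could state more explicitly in the soundness re-run: the r-direct systems here are over $\Mod(\vdash)\cup\{\langle\A,\emptyset\rangle\}$, and $\langle\A,\emptyset\rangle$ need not itself be a model of $\vdash$ (this happens whenever $\vdash$ has theorems but no antitheorems, e.g.\ $\mathsf{LP}$). Your sentence ``case (b) uses only that each $\langle\A_j,F_j\rangle\in\Mod(\vdash)$'' could be misread as asserting this for \emph{all} fibers. The reason the argument still goes through is that in case (b) the index $j=i_h(\Delta)$ lies in $I^{+}$, so $F_j\neq\emptyset$, hence $\langle\A_j,F_j\rangle$ cannot be the empty-filter component and must come from $\Mod(\vdash)$. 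You implicitly rely on this in your final paragraph, but making it explicit would remove any ambiguity.
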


\begin{corollary}
Let $\vdash$ a finitary logic without antitheorems such that $\langle \boldsymbol{1},\emptyset\rangle\in\Mod(\vdash)$, then $\vdash^{r}$ is complete w.r.t. any of the following classes of matrices:
\begin{center}
$\PL(\Mod(\vdash))$, $\PL(\Modstar(\vdash))$, $\PL(\ModS(\vdash)).$
\end{center}
\end{corollary}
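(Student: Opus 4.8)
The plan is to derive this from the two corollaries immediately preceding it in the excerpt, namely Corollary~\ref{cor da usare per calcolo hilbert no inc} (which gives completeness of $\vdash^{r}$ with respect to $\PL(\Mod(\vdash)\cup\langle\A,\emptyset\rangle)$, and likewise for $\Modstar$ and $\ModS$, for any $\A\in\Alg(\vdash)$) together with the observation that when $\langle\boldsymbol{1},\emptyset\rangle\in\Mod(\vdash)$ the extra summand $\langle\A,\emptyset\rangle$ is no longer needed, because one may simply take $\A=\boldsymbol{1}$ and $\langle\boldsymbol{1},\emptyset\rangle$ is already a (reduced, Suszko reduced) model of $\vdash$, hence a member of $\Mod(\vdash)$ (resp. $\Modstar(\vdash)$, $\ModS(\vdash)$). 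Since the hypothesis already includes "without antitheorems", the completeness classes from the no-antitheorem corollary apply directly.

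First I would note that $\langle\boldsymbol{1},\emptyset\rangle\in\Mod(\vdash)$ forces $\langle\boldsymbol{1},\emptyset\rangle\in\Modstar(\vdash)$ and $\langle\boldsymbol{1},\emptyset\rangle\in\ModS(\vdash)$: this is exactly the remark made just before Corollary~\ref{cor da usare per calcolo hilbert no inc}, that the empty filter on the trivial algebra, if a model at all, is automatically reduced (its Leibniz congruence is the identity since $A=\{1\}$ has only the trivial congruence), and likewise Suszko reduced. In particular $\boldsymbol{1}\in\Alg(\vdash)$ and $\boldsymbol{1}\in\Algstar(\vdash)$. Second I would observe that for each of the three classes $\K\in\{\Mod(\vdash),\Modstar(\vdash),\ModS(\vdash)\}$ we therefore have $\langle\boldsymbol{1},\emptyset\rangle\in\K$, so that $\K\cup\langle\boldsymbol{1},\emptyset\rangle=\K$ and hence $\PL(\K\cup\langle\boldsymbol{1},\emptyset\rangle)=\PL(\K)$.

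Third, combining these two observations with Corollary~\ref{cor da usare per calcolo hilbert no inc} applied with the choice $\A=\boldsymbol{1}$, we get $\vdash^{r}\;=\;\vdash_{\PL(\K\cup\langle\boldsymbol{1},\emptyset\rangle)}\;=\;\vdash_{\PL(\K)}$ for each of the three classes, which is precisely the assertion of the corollary. The argument is short and essentially bookkeeping; the only point that requires a moment's care --- and the closest thing to an obstacle --- is the passage $\langle\boldsymbol{1},\emptyset\rangle\in\Mod(\vdash)\Rightarrow\langle\boldsymbol{1},\emptyset\rangle\in\Modstar(\vdash),\ModS(\vdash)$, which rests on the (standard) fact that the trivial algebra admits only the identity congruence, so both $\Leibniz^{\boldsymbol{1}}\emptyset$ and $\Tarski^{\boldsymbol{1}}_{\vdash}\emptyset$ are the identity; I would state this explicitly and then simply invoke the preceding corollary.
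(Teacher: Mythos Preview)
Your proposal is correct and follows exactly the route implicit in the paper: the corollary is stated without proof because it is immediate from Corollary~\ref{cor da usare per calcolo hilbert no inc} upon choosing $\A=\boldsymbol{1}$ and using the observation (made explicitly just above the analogous corollary with antitheorems) that $\langle\boldsymbol{1},\emptyset\rangle\in\Mod(\vdash)$ entails $\langle\boldsymbol{1},\emptyset\rangle\in\Modstar(\vdash)$ (and, as you note, $\ModS(\vdash)$), since $\boldsymbol{1}$ has only the identity congruence.
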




\section{Hilbert style calculi (for logics with r-partition function)}\label{sec:hilbert}

Partitions functions, which have been defined for algebras (see Definition \ref{def: partition function}), can be defined also for logics. 

\begin{definition}\label{def: logic with p-func.}
A logic $\vdash$ has a \emph{r-partition function} if there is a formula $x\ast y$, in which the variables $x$ and $y$ really occur, such that: 
\benroman
\item$x,y\vdash x\ast y$, 
\item$x\ast y \vdash x$,
\item $\varphi(\varepsilon,\vec{z}\?\?)\sineq\varphi(\delta,\vec{z}\?\?)$,
\eroman
for every formula $\varphi(v,\vec{z}\?\?)$ and every identity of the form $\varepsilon \thickapprox \delta$ in Definition \ref{def: partition function}.
\end{definition}

Condition (iii) in the Definition of r-partition function is actually equivalent to say that the term operation $\ast$ is a partition function in every algebra $\A\in\Alg(\vdash)$. This is the consequence of the following (known) fact in abstract algebraic logic. 

\begin{lemma}\label{lemma su equazioni p-function}
Let $\vdash$ be a logic and $\varepsilon,\delta\in Fm $. The following are equivalent:
\benroman
\item $\Alg(\vdash)\vDash\varepsilon\approx\delta$;
\item $\varphi(\varepsilon,\vec{z}\?\?)\sineq\varphi(\delta,\vec{z}\?\?)$, for every formula $\varphi(v,\vec{z}\?\?)$.
\eroman
\end{lemma}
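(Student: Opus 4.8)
The plan is to prove the two implications separately, relying on the standard connection between the algebraic counterpart $\Alg(\vdash)$ and the Suszko reduced models $\ModS(\vdash)$, together with the fact that $\Alg(\vdash) = \mathbb{I}\mathbb{S}\mathbb{P}(\ModS(\vdash)$ restricted to algebraic reducts) up to the usual closure operators, and in particular that a quasi-equation or equation holds in $\Alg(\vdash)$ exactly when it holds in all algebraic reducts of Suszko reduced models. The key translation device is: for a Suszko reduced model $\langle\A,F\rangle$, the Suszko congruence of $F$ is the identity, and the Suszko congruence can be described syntactically via the logic, namely $\langle a,b\rangle\in\Tarski^{\A}_{\vdash}F$ iff for every formula $\varphi(v,\vec z)$ and every $\vec c\in A$, $\varphi(a,\vec c)\in G \Leftrightarrow \varphi(b,\vec c)\in G$ for all $G\in\FFi_\vdash\A$ with $F\subseteq G$; equivalently, iff $\varphi(a,\vec c)$ and $\varphi(b,\vec c)$ generate the same filter over $F$.

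\textbf{From (ii) to (i).} Assume $\varphi(\varepsilon,\vec z)\sineq\varphi(\delta,\vec z)$ for every formula $\varphi(v,\vec z)$. I would take an arbitrary $\A\in\Alg(\vdash)$, so there is $F\subseteq A$ with $\langle\A,F\rangle\in\ModS(\vdash)$, and an arbitrary homomorphism (evaluation) $h\colon\Fm\to\A$. Writing $a = h(\varepsilon)$, $b=h(\delta)$, I want $a=b$. Because $\Tarski^{\A}_{\vdash}F$ is the identity, it suffices to show $\langle a,b\rangle$ lies in the Suszko congruence, i.e.\ that $a$ and $b$ are indistinguishable by all formulas relative to all filters $G\supseteq F$. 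So fix such a $G$ and a formula $\psi(v,\vec w)$ and parameters; the substitution invariance of $\vdash$ applied to the hypothesis $\varphi(\varepsilon,\vec z)\vdash\varphi(\delta,\vec z)$ (and conversely) with $\varphi := \psi$ and a substitution sending the variables to formulas realizing the relevant elements under $h$ yields that $\psi(a,\ldots)\in G \Leftrightarrow \psi(b,\ldots)\in G$. Hence $\langle a,b\rangle\in\Tarski^{\A}_{\vdash}F = \mathrm{id}_A$, so $a=b$; as $\A$ and $h$ were arbitrary, $\Alg(\vdash)\vDash\varepsilon\approx\delta$.

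\textbf{From (i) to (ii).} Assume $\Alg(\vdash)\vDash\varepsilon\approx\delta$. Fix a formula $\varphi(v,\vec z)$; I must show $\varphi(\varepsilon,\vec z)\vdash\varphi(\delta,\vec z)$ (the converse is symmetric). Using completeness of $\vdash$ with respect to $\ModS(\vdash)$, take a Suszko reduced model $\langle\A,F\rangle$ and an evaluation $h$ with $h(\varphi(\varepsilon,\vec z))\in F$; set $a=h(\varepsilon)$, $b=h(\delta)$, $\vec c = h(\vec z)$. Since $\A\in\Alg(\vdash)$ and the identity $\varepsilon\approx\delta$ holds there, evaluating it under $h$ gives $a=b$, whence $h(\varphi(\delta,\vec z)) = \varphi^{\A}(b,\vec c) = \varphi^{\A}(a,\vec c) = h(\varphi(\varepsilon,\vec z))\in F$. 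As the Suszko reduced models form a complete semantics, $\varphi(\varepsilon,\vec z)\vdash\varphi(\delta,\vec z)$.

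\textbf{Main obstacle.} The delicate point is the (ii)$\Rightarrow$(i) direction, specifically translating the deductive hypothesis into a statement about the Suszko congruence: one must invoke the correct characterization of $\Tarski^{\A}_{\vdash}F$ (as the intersection of Leibniz congruences over all larger filters, spelled out via formula-indistinguishability) and then marshal substitution invariance to transfer $\varphi(\varepsilon,\vec z)\sineq\varphi(\delta,\vec z)$ from the formula algebra to interpretability in $\A$. This is a known fact in abstract algebraic logic (essentially the description of $\Alg(\vdash)$ via interderivability in arbitrary contexts), so I would cite \cite{Font16} for the precise form of the Suszko-congruence characterization rather than reprove it, and keep the argument at the level of sketch above.
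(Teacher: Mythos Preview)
Your proposal is correct and is essentially the standard abstract-algebraic-logic argument; the paper itself does not give a proof but simply cites \cite[Lemma 5.74(1) and Theorem 5.76]{Font16}, which contain precisely the characterization of $\Tarski^{\A}_{\vdash}F$ and the equivalence you outline. One minor wording fix in your (ii)$\Rightarrow$(i) direction: you speak of a \emph{substitution} sending variables to formulas ``realizing the relevant elements under $h$'', but elements of $\A$ need not be term-definable; what you actually use is an \emph{evaluation} $h'\colon\Fm\to\A$ agreeing with $h$ on the variables of $\varepsilon,\delta$ and mapping the (fresh) parameter variables $\vec z$ to the chosen elements of $A$, together with the fact that $\langle\A,G\rangle\in\Mod(\vdash)$ for every $G\in\FFi_{\vdash}\A$.
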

\begin{proof}
See \cite[Lemma 5.74(1)]{Font16} and \cite[Theorem 5.76]{Font16}.
\end{proof}


From now on, we will denote both the formula $x\ast y$ and the term operation $\ast$ as r-partition functions with respect to a logic $\vdash$.

The definition of partition functions for an arbitrary logic is introduced also in \cite[Definition 16]{BonzioMorascoPrabaldi}. It shall be noticed that Definition \ref{def: logic with p-func.} is essentially different (this also motivates the choice of the terminology $r$-partition function). Nevertheless, in most cases (for instance, all substructural logics, classical and modal logics) the very same formula plays both the role of a r-partition function and of a partition function in the sense of \cite[Definition 16]{BonzioMorascoPrabaldi}. 

\begin{example}
Logics with a r-partition function abound in the literature. Indeed, the term $x\ast y\coloneqq x\land(x\lor y)$ is a partition function for every logic $\vdash$ such that $\Alg(\vdash)$ has a lattice reduct. Such examples include all modal and substructural logics \cite{GaJiKoOn07}. On the other hand, the term $x\ast y\coloneqq (y \to y) \to x$ as a r-partition function for all the logics $\vdash$ whose class $\Alg(\vdash)$ possesses a Hilbert algebra (see \cite{Di65}) or a BCK algebra (see \cite{Iseki}) reduct.
\end{example}

\begin{remark}\label{rem: vdash e vdashr hanno le stessa pf}
It is easily checked that a logic $\vdash$ has r-partition function $\ast$ if and only if $\vdash^{r}$ has r-partition function $\ast$.
\end{remark}

In the following, we extend P\l onka representation theorem to r-direct systems of logical matrices. 

\begin{theorem}\label{th: plonka sum of r- matrices}
Let $\vdash$ be a logic with $r$-partition function $\ast$, and $\langle\A,F\rangle$ be a model of $\vdash$ such that $\A\in\Alg(\vdash) $. Then Theorem \ref{th: Teorema di Plonka} holds for $\A$. 
Moreover, by setting $F_{i} \coloneqq F\cap A_{i}$ for every $i \in I$, the triple
\[
X=\langle \langle I,\leq\rangle , \{ \langle \A_{i},F_{i}\rangle\}_{i\in I}, \{ f_{ij} \! : \! i \leq j \}\rangle
\]
is an r-direct system of matrices such that $\PL(X)=\langle\A,F\rangle$.
\end{theorem}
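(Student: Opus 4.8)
The plan is to leverage Theorem~\ref{th: Teorema di Plonka} directly: since $\ast$ is an $r$-partition function for $\vdash$ and $\A\in\Alg(\vdash)$, condition (iii) of Definition~\ref{def: logic with p-func.} together with Lemma~\ref{lemma su equazioni p-function} tells us that $\Alg(\vdash)\vDash\varepsilon\approx\delta$ for each of the defining identities \textbf{P1}--\textbf{P5}; hence the term operation $\ast^{\A}$ is a genuine partition function on $\A$ in the sense of Definition~\ref{def: partition function}. This immediately gives the first assertion, and it equips us with the partition $\{A_i : i\in I\}$, the semilattice order on $I$, the subalgebras $\A_i$, and the transition homomorphisms $f_{ij}(x)=x\cdot b$ (for $b\in A_j$), with $\PL((\A_i)_{i\in I})=\A$ as algebras. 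So the only real work is to check that, after setting $F_i\coloneqq F\cap A_i$, the triple $X$ satisfies the three matrix-side requirements of Definition~\ref{Def:Directed-System-Matrices} and that the resulting filter $\bigcup_{i\in I}F_i$ equals $F$.

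The equality $\bigcup_{i\in I}F_i = F$ is trivial because $\{A_i\}$ partitions $A$. For the condition that $I^{+}=\{i : F_i\neq\emptyset\}$ is a sub-semilattice, I would argue as follows: if $a\in F_i$ and $b\in F_j$, I need $F_{i\lor j}\neq\emptyset$, and the natural candidate is $a\lor_{\PL} b = a\cdot c \;\lor^{\A_{i\lor j}}\; b\cdot c$ for suitable $c$ — but more robustly, since $x,y\vdash x\ast y$ (clause (i) of the $r$-partition function) and $\langle\A,F\rangle$ is a model of $\vdash$, from $a,b\in F$ we get $a\ast^{\A} b\in F$; and by \textbf{P4}/\textbf{P5}-type computations $a\ast^{\A}b$ lies in the component $A_{i\lor j}$ (this is essentially how $\ast$ acts as join-to-top on indices — $a\cdot b\in A_{i\lor j}$ whenever $a\in A_i,b\in A_j$, which follows from the partition-function axioms as in Płonka's theorem). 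Hence $F_{i\lor j}\neq\emptyset$, so $I^{+}$ is closed under $\lor$. The homomorphism bookkeeping ($f_{ii}=\mathrm{id}$, $f_{ik}=f_{jk}\circ f_{ij}$) is inherited verbatim from Theorem~\ref{th: Teorema di Plonka}(3)--(4).

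The substantive clause is the last one: if $F_j\neq\emptyset$ then $f_{ij}^{-1}[F_j]=F_i$ for $i\leq j$. For the inclusion $f_{ij}^{-1}[F_j]\subseteq F_i$: take $a\in A_i$ with $f_{ij}(a)=a\cdot b\in F_j\subseteq F$ (any fixed $b\in A_j$); I want $a\in F$. Here I would use clause (ii) of the $r$-partition function, $x\ast y\vdash x$: choosing an evaluation sending $x\mapsto a$, $y\mapsto b$ and using that $\langle\A,F\rangle$ is a model of $\vdash$, from $a\ast^{\A} b = a\cdot b\in F$ we conclude $a\in F$, whence $a\in F\cap A_i = F_i$. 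For the reverse inclusion $F_i\subseteq f_{ij}^{-1}[F_j]$: take $a\in F_i$; then $f_{ij}(a)=a\cdot b$, and since $F_j\neq\emptyset$ we may pick $b\in F_j\subseteq F$, so $a,b\in F$ gives $a\ast^{\A}b=a\cdot b\in F$ by clause (i) and modelhood, and $a\cdot b\in A_j$ by the index computation, hence $a\cdot b\in F\cap A_j=F_j$, i.e.\ $a\in f_{ij}^{-1}[F_j]$. I expect this last clause — specifically making the interaction between the two inequalities $x\ast y\vdash x$ and $x,y\vdash x\ast y$ and the component-index arithmetic fully precise — to be the main obstacle; everything else is a direct citation of Theorem~\ref{th: Teorema di Plonka} and Lemma~\ref{lemma su equazioni p-function}. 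Finally, $\PL(X)=\langle\A,F\rangle$ holds by combining $\PL((\A_i)_{i\in I})=\A$ from Theorem~\ref{th: Teorema di Plonka}(4) with $\bigcup_i F_i=F$.
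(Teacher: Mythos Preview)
Your proposal is correct and follows essentially the same approach as the paper: both arguments invoke Theorem~\ref{th: Teorema di Plonka} for the algebraic decomposition (via the observation that $\ast$ is a partition function on any $\A\in\Alg(\vdash)$), then verify the two matrix-side clauses of Definition~\ref{Def:Directed-System-Matrices} using, respectively, $x,y\vdash x\ast y$ for the sub-semilattice property of $I^{+}$ and the pair of rules $x,y\vdash x\ast y$, $x\ast y\vdash x$ for the preimage condition $f_{ij}^{-1}[F_j]=F_i$. The only stylistic differences are that the paper handles the preimage condition by contradiction (sketching that failure of either inclusion contradicts one of the two rules) whereas you prove each inclusion directly, and the paper is slightly more explicit about why $a\ast^{\A}b\in A_{i\lor j}$, writing it as $f_{ik}(a)\ast^{\A_k}f_{jk}(b)$ from the definition of operations in a P\l onka sum rather than appealing to \textbf{P4}/\textbf{P5}.
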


\begin{proof}
Theorem \ref{th: Teorema di Plonka} holds for $\A$, by simply observing that $\ast$ is a partition function for $\A$. 
For the remaining part, it will be enough to show: 
\begin{itemize}
\item[(a)] for every $i,j\in I$ such that $i\leq j$, if $F_{j}\neq\emptyset$ then $f_{ij}^{-1}[F_{j}]=F_{i}$;
\item[(b)]  $I^{+}$ is a sub-semilattice of $I$.
\end{itemize}

In order to prove (a), consider $i,j\in I$ such that $i\leq j$ and let $F_{j}$ be non-empty. Assume, in view of a contradiction, that $f_{ij}^{-1}[F_{j}]\neq F_{i}$. This implies that $F_{i}\nsubseteq f_{ij}^{-1}[F_{j}]$ or that $f_{ij}^{-1}[F_{j}]\nsubseteq F_{i}$. The first case immediately leads to the contradiction that $x,y\nvdash x\ast y$, while the second case contradicts $x\ast y\vdash x$. This proves (a).


In order to prove (b), consider $i,j\in I^{+}$ and let $k= i\lor j$, with $i,j,k\in I$.  As $\ast$ is a $r$-partition function  for $\vdash$, $x,y\vdash x\ast y$. Since $i,j\in I^{+}$, then $F_i$ and $F_j$ are non-empty, therefore there exist two elements $a\in F_{i}$, $b\in F_{j}$. We have $a\ast^{\A}b=f_{ik}(a)\ast^{\A_{k}}f_{jk}(b)\in A_{k}$. This, together with the fact that $\langle\A,F\rangle\in\Mod(\vdash)$ implies $a\ast b\in F_{k}$, i.e. $F_{k}\neq\emptyset$. So $k\in I^{+}$ and this proves (b).
\end{proof}

Given a logic $\vdash$ with a r-partition function $\ast$ and a model $\langle\A,F\rangle$ of $\vdash$ such that $\A\in\Alg(\vdash)$, we call \emph{P\l onka fibers} of $\langle \A, F \rangle$ the matrices $\{ \langle \A_{i},F_{i}\rangle\}_{i\in I}$ given by the decomposition in Theorem \ref{th: plonka sum of r- matrices}. From now on, when considering a model $\pair{\A,F}$ of a logic $\vdash$ with $r$-partition function, we will assume that $\pair{\A, F}= \PL(X)$, for a given direct system $
X=\langle \langle I,\leq\rangle , \{\langle \A_{i},F_{i}\rangle\}_{i\in I}, \{ f_{ij} \! : \! i \leq j \} \rangle
$, without explicitly mentioning the r-direct system $X$.

\begin{lemma}\label{lemma su filtri non vuoti che sono modelli della logica iniziale}
Let $\vdash^{r}$ be a logic with $r$-partition function $\ast$, and $\langle\A,F\rangle\in\Mod(\vdash^{r})$, with $\A\in\Alg(\vdash^{r})$. Then, the P\l onka fibers $ \langle\A_{i},F_{i}\rangle $, such that $i\in I^{+}$, are models of $\vdash$.
\end{lemma}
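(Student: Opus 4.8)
The plan is to show that each fiber $\langle \A_i, F_i\rangle$ with $i\in I^+$ is a model of $\vdash$ by directly verifying the defining condition: whenever $\Gamma\vdash\varphi$ and $k\colon\Fm\to\A_i$ is a homomorphism with $k[\Gamma]\subseteq F_i$, we must have $k(\varphi)\in F_i$. Since $\vdash$ is (assumed) finitary, it suffices to treat finite $\Gamma$. The key idea is to build, from $k$ and a chosen ``witness'' element of $F_i$, an evaluation $h\colon\Fm\to\A$ into the full P\l onka sum that lands inside $F$ on $\Gamma\cup\{\varphi\}$, invoke that $\langle\A,F\rangle\in\Mod(\vdash^r)$ (hence $\Gamma\vdash^r\varphi$ transfers), and then pull the result back down to the fiber $\A_i$ using the defining property $f_{ki}^{-1}[F_i]=F_i$ of an r-direct system.

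In more detail: first I would note that $\Var(\varphi)$ need not be contained in $\Var(\Gamma)$, so $\Gamma\vdash\varphi$ does not immediately give $\Gamma\vdash^r\varphi$; this is precisely the obstacle to overcome. To get around it, fix a variable $y$ occurring in $\Gamma$ (if $\Gamma$ is empty the claim is handled separately, using $\langle\A_i,F_i\rangle$ nontrivial only when needed, or one reduces to theorems of $\vdash$), and set $\Gamma' \coloneqq \Gamma\cup\{\,x\ast y : x\in\Var(\varphi)\,\}$ where $\ast$ is the r-partition function. By condition (i) of Definition \ref{def: logic with p-func.}, $x,y\vdash x\ast y$, so from $k[\Gamma]\subseteq F_i$ we get $k[\Gamma']\subseteq F_i$ provided $k(y)\in F_i$, which holds since $y\in\Var(\Gamma)$ and $k[\Gamma]\subseteq F_i$; moreover $\Var(\varphi)\subseteq\Var(\Gamma')$ now holds. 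Since $\vdash$ is monotone, $\Gamma'\vdash\varphi$, and since the variable-inclusion condition is met, $\Gamma'\vdash^r\varphi$. Because $\langle\A,F\rangle\in\Mod(\vdash^r)$, the evaluation $k$ (viewed as a homomorphism $\Fm\to\A$ landing in the single fiber $\A_i$) satisfies: $k[\Gamma']\subseteq F\;\Rightarrow\;k(\varphi)\in F$. But $k$ maps everything into $A_i$, so $k(\varphi)\in F\cap A_i = F_i$, as desired.

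The one point that needs care is the hypothesis $k(y)\in F_i$: I chose $y\in\Var(\Gamma)$ precisely so that $k(y)$ is one of the images forced into $F_i$ by $k[\Gamma]\subseteq F_i$ — but this requires $\Gamma$ to contain at least one variable. If $\Gamma$ has no variables at all (e.g. $\Gamma=\emptyset$, using that the language is constant-free so in fact $\Gamma=\emptyset$ is the only such case), then $\Gamma\vdash\varphi$ means $\varphi$ is a theorem of $\vdash$; since $\vdash$ and $\vdash^r$ would then need $\varphi$ to be a theorem of $\vdash^r$, one observes that $\vdash$ theoremless is the generic situation and otherwise handles theorems directly — in fact if $\vdash$ has a theorem $\varphi$, then since $I^+\neq\emptyset$ pick $a\in F_i$, and the constant evaluation argument together with $x\ast y\vdash x$ shows $k(\varphi)\in F_i$. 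The main obstacle, as flagged, is purely the variable-inclusion mismatch between $\vdash$ and $\vdash^r$; the r-partition function is exactly the tool that repairs it, which is why the lemma is stated for logics possessing one. Everything else is routine bookkeeping with the P\l onka fiber structure and the clause $f_{ij}^{-1}[F_j]=F_i$ from Definition \ref{Def:Directed-System-Matrices}, which here is only needed in the degenerate form $i=j$.
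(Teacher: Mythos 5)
Your overall strategy is the right one (pad the premises with $\ast$-formulas so that the variable-inclusion condition is met, then invoke $\langle\A,F\rangle\in\Mod(\vdash^{r})$ and intersect with the fiber), and it matches the paper's strategy; but the execution has a genuine gap at the padding step. You set $\Gamma'=\Gamma\cup\{x\ast y : x\in\Var(\varphi)\}$ with $y\in\Var(\Gamma)$ and claim $k[\Gamma']\subseteq F_{i}$. This fails twice. First, $k(y)\in F_{i}$ does not follow from $y\in\Var(\Gamma)$ and $k[\Gamma]\subseteq F_{i}$: the filter contains the values of the \emph{formulas} of $\Gamma$, not of their variables (e.g.\ $\Gamma=\{\neg y\}$ in a Boolean fiber with $F_i=\{1\}$ forces $k(y)\notin F_i$). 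Second, and more fundamentally, the rule you invoke, $x,y\vdash x\ast y$, needs \emph{both} $k(x)\in F$ and $k(y)\in F$, and $k(x)$ for $x\in\Var(\varphi)\smallsetminus\Var(\Gamma)$ is completely unconstrained; indeed, since $k$ lands in a single P\l onka fiber and $a\ast b=a$ for $a,b$ in the same fiber (Theorem \ref{th: Teorema di Plonka}(1)), one has $k(x\ast y)=k(x)$, which in general lies outside $F_{i}$. So $k[\Gamma']\subseteq F$ cannot be asserted and the appeal to $\langle\A,F\rangle\in\Mod(\vdash^{r})$ never gets off the ground. (You cannot repair this by re-assigning those variables into $F_i$ either, because the conclusion must be $k(\varphi)\in F_i$ for the original $k$.)

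The fix is to put the premise in the \emph{first} argument of $\ast$, which is what the paper does: fix $\gamma\in\Gamma$ and pad with $X_{\gamma}=\{\gamma\ast x : x\in X\}$, where $X=\Var(\varphi)\smallsetminus\Var(\Gamma)$. Then within the fiber $k(\gamma\ast x)=k(\gamma)\in F_{i}$, so the padded set is evaluated into $F$; derivability is preserved because $\gamma\ast x\vdash\gamma$ (condition (ii) of Definition \ref{def: logic with p-func.}) gives $X_{\gamma},\Gamma\smallsetminus\{\gamma\}\vdash\varphi$ (or, even more simply, $\Gamma\cup X_{\gamma}\vdash\varphi$ by monotonicity), and variable inclusion now holds, so the padded set proves $\varphi$ in $\vdash^{r}$; finally $k(\varphi)\in F\cap A_{i}=F_{i}$. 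The paper runs this by contradiction and also notes at the outset that $\Gamma$ contains no antitheorem and that $\Var(\varphi)\nsubseteq\Var(\Gamma)$ may be assumed, but the essential point is exactly the order of the arguments of $\ast$ that your version gets wrong. Your closing remarks are also slightly off on minor points: the clause $f_{ij}^{-1}[F_{j}]=F_{i}$ is not what is needed here (one only uses $F_{i}=F\cap A_{i}$ from Theorem \ref{th: plonka sum of r- matrices}), and the theorem case ($\Gamma=\emptyset$) should be handled by the same padding trick with $\gamma$ replaced by a premise-free use of an element of $F_i$ assigned to a fresh variable occurring first in $\ast$, not by the sketch you give.
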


\begin{proof}

Let $\Gamma\vdash\varphi$ and suppose, by contradiction, that there exists a matrix $\pair{\A_j, F_j}$, with $j\in I^{+}$, and a homomorphism $h\colon\Fm\to\A_{j}$ such that $h[\Gamma]\subseteq F_{j}$ and $h(\varphi)\notin F_{j}$. Preliminarily, observe that $\Var(\varphi)\nsubseteq\Var(\Gamma)$ and, moreover, if $\vdash$ has an antitheorem $\Sigma(x)$, then $\Sigma(x)\nsubseteq\Gamma$, for otherwise $\Gamma\vdash^{r}\varphi$, which is in contradiction with our assumption that  $\langle\A,F\rangle\in\Mod(\vdash^{r})$. Denote by $X$ the (non-empty) set of variables occurring in $\varphi$ but not in $\Gamma$ and, for $\gamma\in \Gamma$, let $X_{\gamma}\coloneqq\{\gamma\ast x: x\in X\}$ and $\Gamma^{-}_{\gamma}\coloneqq \Gamma\smallsetminus\{\gamma\}$. Since $\ast$ is a r-partition function for $\vdash^{r}$, we have $\gamma\ast x\vdash^{r} \gamma$. Therefore $\gamma\ast x\vdash \gamma$ and $X_{\gamma}\vdash \gamma$, which implies $X_{\gamma},\Gamma^{-}_{\gamma}\vdash\varphi$, for any $\gamma\in\Gamma$. Observe that $\Var(\varphi)\subseteq\Var (X_{\gamma})\cup\Var(\Gamma^{-}_{\gamma})$, hence $X_{\gamma},\Gamma^{-}_{\gamma}\vdash^{r}\varphi$.

Since $h(\gamma),h(\varphi)\in A_{j}$ and $x\in\Var(\varphi)$, for every $x\in X$, we have that $h(\gamma\ast x)=h(\gamma)$, whence $h[X_{\gamma}]= h(\gamma)$. Now, for an arbitrary $a\in A$, we define a homomorphism $g\colon\Fm\to\A$, as follows 
\[
g(x)\coloneqq \left\{ \begin{array}{ll}
h(x) \ \text{if} \ \ x\in\Var(\Gamma)\cup\Var(\varphi)& \\
a \ \text{otherwise.} & \\
  \end{array} \right.  
\]

We have $g[X_{\gamma}] = h[X_{\gamma}]= h(\gamma)\in F_j$, $g[\Gamma^{-}_{\gamma}]=h[\Gamma^{-}_{\gamma}]\in F_j$ and $g(\varphi)=h(\varphi)\not\in F_j$. A contradiction.
\end{proof}

The following example provides a simple instance of the above Lemma \ref{lemma su filtri non vuoti che sono modelli della logica iniziale}.

\begin{example}\label{example lemma 24}
Consider the matrix $\pair{\B, F}$ constructed as P\l onka sum of the (non-trivial) Boolean algebras $\A_{i},\A_{j}\A_{k},\A_{s}$ over the r-direct system depicted below (the index set is the four-element Boolean algebra): circles indicate filters, consisting of the top elements $1_i, 1_j$ of $\A_i,\A_j$, respectively. Thus, $F = \{1_{i},1_{j}\}$. Dotted lines represent arbitrary P\l onka homomorphisms.  
\begin{center}

\begin{tikzpicture}
\draw (-2,6) node {$\bullet$};
\draw (-2,7) node {$ \A_{s}$};
\draw (-2,8) node {$\bullet$};

\draw (-4,5) node {$\bullet$};
\draw (-4,4) node {$ \A_{k}$};
\draw (-4,3) node {$\bullet$};

\draw (0,5) node {\circled{$\bullet$}};
\draw (0,4) node {$ \A_{j}$};
\draw (0,3) node {$\bullet$};

\draw (-2,0) node {$\bullet$};
\draw (-2,1) node {$ \A_{i}$};
\draw (-2,2) node {\circled{$\bullet$}};

\draw (-2,6) edge[bend left=70] (-2,8) ;
\draw (-2,6) edge[bend right=70] (-2,8);
	\draw (-4,5) edge[ dotted] (-2,8) ;
		\draw (-4,3) edge[dotted] (-2,6) ;	
		\draw (0,5) edge[dotted] (-2,8) ;
		\draw (0,3) edge[dotted] (-2,6) ;
		
\draw (-4,5) edge[bend left=70] (-4,3) ;
\draw (-4,5) edge[bend right=70] (-4,3) ;
	\draw (-2,2) edge[dotted] (-4,5) ;
		\draw (-2,0) edge[dotted] (-4,3) ;

\draw (0,5) edge[bend left=70] (0,3) ;
\draw (0,5) edge[bend right=70] (0,3) ;

\draw (-2,0) edge[bend left=70] (-2,2) ;
\draw (-2,0) edge[bend right=70] (-2,2) ;
	\draw (-2,2) edge[dotted] (0,5) ;
		\draw (-2,0) edge[dotted] (0,3) ;

\end{tikzpicture}
\end{center}

By Theorem \ref{completeness}, $\pair{\B, F}$ is a model of Bochvar logic $\mathsf{B_{3}}$. It can be easily checked that $\B\in \Alg(\mathsf{B}_3)$. 
Moreover, $I^+=\{i,j\}$ and clearly $\langle\A_i,1_i\rangle,\langle\A_j,1_j\rangle\in\Mod(\CL)$.
\end{example}

The presence of a r-partition function yields an important syntactic consequence: it allows to adapt a Hilbert style calculus of a logic $\vdash$ into a calculus, for its contaiment companion $\vdash^{r}$. Despite $\vdash^{r}$ is defined via a linguistic restriction constraint (on the inclusion of variables), the axiomatization that we obtain is free of any (linguistic) restriction. 

Throughout the remaining part of this section, we implicitly assume that the logic $\vdash$ possesses an antitheorem. Our analysis can be easily adapted to the case where $\vdash$ does not have antitheorems (see Remark \ref{rem: Hilbert senza inconsistency terms}).

 In what follows, by a \textit{Hilbert-style calculus with finite rules}, we understand a (possibly infinite) set of Hilbert-style rules, each of which has finitely many premises. 

\begin{definition}\label{def: hilbert calc per r}
Let $\mathcal{H}$ be a Hilbert-style calculus with finite rules, which determines a logic $\vdash$ with a $r$-partition function $\ast$ and an antitheorem $\Sigma(x)$.  Let $\mathcal{H}^{r}$ be the Hilbert-style calculus given by the following rules: 

\begin{align}
x\ast \varphi &\rhd \varphi \tag{H0}\label{Eq:Axiom0}\\
x,y &\rhd x\ast y \tag{H1}\label{Eq:Axiom1}\\
x\ast y &\rhd  x\tag{H2}\label{Eq:Axiom2}\\
\{\gamma_{1},\dots,\gamma_{n}\}\smallsetminus\{\gamma_{i}\}, \gamma_{i}\ast\psi &\rhd \psi\tag{H3}\label{Eq:Axiom3} \\
\Sigma(x) &\rhd \alpha \tag{H4}\label{Eq:Axiom4}\\
\chi(\delta, \vec{z}\?\?) \?\?\?\lhd&\rhd \chi(\varepsilon, \vec{z}\?\?)\tag{H5}\label{Eq:Axiom5}
\end{align}

for every
\begin{enumerate}[(i)]
\item $\rhd\varphi$ axiom in $\mathcal{H}$ ;
\item $\gamma_{1},\dots,\gamma_{n}\rhd \psi$ rule in $\mathcal{H}$ (and $\gamma_i$ such that $i\in\{i,\dots,n\}$); 
\item $\delta \thickapprox\varepsilon$ equation in the definition of partition function, and formula $\chi(v, \vec{z}\?\?)$.
\end{enumerate}
\end{definition}

\begin{lemma}\label{lemma: spezza dimostrazione completezza Hilbert}
Let $\vdash$ be a logic with a $r$-partition function $\ast$, an antitheorem $\Sigma(x)$ and let $\langle\A,F\rangle\in\ModS(\vdash_{\mathcal{H}^{r}})$. Then:
\benroman 
\item $\pair{\A,F}=\PL(X)$, where $X=\pair{\langle \langle I,\leq\rangle, \{ \langle \A_{i},F_{i}\rangle\}_{i\in I}, \{ f_{ij} \! : \! i \leq j \}}$ is an r-direct system of matrices;
\item if $X$ contains a trivial matrix then $\A = \mathbf{1}$.
\eroman
\end{lemma}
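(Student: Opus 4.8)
The plan is to show that the axioms \eqref{Eq:Axiom0}--\eqref{Eq:Axiom2} and \eqref{Eq:Axiom5} force $\ast$ to behave as a partition function on $\A$, so that Theorem~\ref{th: Teorema di Plonka} applies. First I would observe that since $\langle\A,F\rangle\in\ModS(\vdash_{\mathcal H^r})$, the Suszko congruence is the identity; in particular $\A\in\Alg(\vdash_{\mathcal H^r})$, and by Remark~\ref{rem: vdash e vdashr hanno le stessa pf} the logic $\vdash_{\mathcal H^r}$ has $\ast$ as an $r$-partition function. Rule \eqref{Eq:Axiom5} says precisely that $\chi(\delta,\vec z\?)\sineq\chi(\varepsilon,\vec z\?)$ for every equation $\delta\approx\varepsilon$ from Definition~\ref{def: partition function} and every formula $\chi$; by Lemma~\ref{lemma su equazioni p-function} this means $\Alg(\vdash_{\mathcal H^r})\vDash\delta\approx\varepsilon$ for all those equations, so $\ast^{\A}$ satisfies conditions P1--P5 on $\A$, i.e.\ $\ast^{\A}$ is a partition function for $\A$. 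Hence Theorem~\ref{th: Teorema di Plonka} applies: $A$ decomposes as $\bigcup_{i\in I}A_i$ over a semilattice $I$, with homomorphisms $f_{ij}(x)=x\ast b$, and $\A=\PL(\A_i)_{i\in I}$. Setting $F_i\coloneqq F\cap A_i$ gives a candidate $X$; it remains to check that $X$ is an \emph{r}-direct system of matrices, i.e.\ the two extra clauses of Definition~\ref{Def:Directed-System-Matrices} and that $\PL(X)=\langle\A,F\rangle$ (the last equality being immediate since $\bigcup_i F_i=F$ and the algebraic reduct is $\PL(\A_i)=\A$).

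For the \emph{r}-direct system conditions I would argue exactly as in the proof of Theorem~\ref{th: plonka sum of r- matrices}, but now using the rules of $\mathcal H^r$ directly rather than the abstract hypothesis that $\langle\A,F\rangle$ models $\vdash$. That is: to see $I^+$ is a sub-semilattice, take $i,j\in I^+$, pick $a\in F_i$, $b\in F_j$, and apply rule \eqref{Eq:Axiom1} ($x,y\rhd x\ast y$); since $\langle\A,F\rangle$ is a model of $\vdash_{\mathcal H^r}$ we get $a\ast b\in F$, and $a\ast b\in A_{i\lor j}$ by the P\l onka-sum computation, so $i\lor j\in I^+$. For the clause $f_{ij}^{-1}[F_j]=F_i$ whenever $F_j\neq\emptyset$ and $i\leq j$: the inclusion $F_i\subseteq f_{ij}^{-1}[F_j]$ follows from \eqref{Eq:Axiom1} (if $a\in F_i$, pick any $b\in F_j$, then $a\ast b=f_{ij}(a)\in F_j$), and the reverse inclusion $f_{ij}^{-1}[F_j]\subseteq F_i$ follows from \eqref{Eq:Axiom2} ($x\ast y\rhd x$): if $f_{ij}(a)=a\ast b\in F_j\subseteq F$ then $a\in F$, and $a\in A_i$ forces $a\in F_i$. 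This establishes part (i).

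For part (ii), suppose $X$ contains a trivial matrix, say $F_m=A_m$ for some $m\in I$. I want to conclude $\A=\mathbf 1$. The key is rule \eqref{Eq:Axiom0}, $x\ast\varphi\rhd\varphi$, which I have not yet used: it says that whenever $a\ast c\in F$ then $c\in F$, for all $a,c\in A$. Given $c\in A$ arbitrary, lying in some component $A_k$, form $c\ast a$ for a witness $a\in A_m$; then $c\ast a=f_{km}(c)\in A_m=F_m\subseteq F$ wait — here the components of $c\ast a$ is $A_{k\lor m}$, not $A_m$, so I should instead take $a\in A_m$ with $m$ chosen so that $k\le m$; but $m$ is fixed, so this needs care. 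The clean route: since $F_m=A_m\neq\emptyset$, $m\in I^+$; for arbitrary $k\in I$, $k\lor m\in I^+$ too, and by the clause just proved $f_{m,k\lor m}^{-1}[F_{k\lor m}]=F_m=A_m$, which forces $F_{k\lor m}\supseteq f_{m,k\lor m}[A_m]$, and more usefully, running the argument with \eqref{Eq:Axiom2}/\eqref{Eq:Axiom0}, one shows every $F_{k\lor m}$ is also full, hence $f_{k,k\lor m}^{-1}[F_{k\lor m}]=F_k$ gives $F_k=A_k$ for all $k$, so $F=A$ and $\langle\A,F\rangle$ is a trivial matrix in $\ModS(\vdash_{\mathcal H^r})$; by the remark in Section~\ref{sec: preliminari} on trivial Suszko-reduced models, $\langle\A,F\rangle=\langle\mathbf 1,\{1\}\rangle$, so $\A=\mathbf 1$. \textbf{The main obstacle} is precisely this last step: pinning down, from the presence of one trivial fiber together with the axioms \eqref{Eq:Axiom0}--\eqref{Eq:Axiom2}, that every fiber becomes trivial — one must make sure the semilattice joins propagate fullness of filters both upward (via $I^+$ closure and \eqref{Eq:Axiom1}) and downward (via $f^{-1}[F_j]=F_i$), and then invoke Suszko-reducedness to collapse to the one-element algebra; getting the direction of the P\l onka homomorphisms right in these little computations is where the care is needed, but it is bookkeeping rather than a genuine difficulty.
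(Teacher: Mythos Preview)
Your argument for part~(i) is correct and matches the paper's, though the paper is terser: it observes that \eqref{Eq:Axiom1}, \eqref{Eq:Axiom2}, \eqref{Eq:Axiom5} make $\ast$ an $r$-partition function for $\vdash_{\mathcal{H}^r}$ and then invokes Theorem~\ref{th: plonka sum of r- matrices} directly, whereas you unfold that theorem's proof. One minor slip: your appeal to Remark~\ref{rem: vdash e vdashr hanno le stessa pf} is circular, since at this point you have not shown $\vdash_{\mathcal{H}^r}\,=\,\vdash^{r}$; fortunately your subsequent direct argument via \eqref{Eq:Axiom5} and Lemma~\ref{lemma su equazioni p-function} is the right one and stands on its own.

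Part~(ii), however, has a genuine gap. You never invoke rule \eqref{Eq:Axiom4}, and the alternative route through \eqref{Eq:Axiom0}--\eqref{Eq:Axiom2} does not go through. First, \eqref{Eq:Axiom0} is not the universal rule $x\ast y\rhd y$ you describe: in Definition~\ref{def: hilbert calc per r} it is the schema $x\ast\varphi\rhd\varphi$ \emph{only for axioms $\varphi$ of $\mathcal{H}$}, so it does not yield ``$a\ast c\in F\Rightarrow c\in F$ for all $a,c\in A$''. Second, even granting that misreading, your claim that $k\lor m\in I^{+}$ for arbitrary $k\in I$ is unjustified: $I^{+}$ is merely a sub-semilattice of $I$, not an up-set, and nothing you have proved forces $F_{k\lor m}\neq\emptyset$ when $k\notin I^{+}$. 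Concretely, a two-element chain $m<k$ with $F_m=A_m$ and $F_k=\emptyset$ is a valid $r$-direct system in which \eqref{Eq:Axiom1} and \eqref{Eq:Axiom2} both hold, so those rules alone cannot propagate triviality from the fiber at $m$ to the fiber at $k$.

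The paper's proof of (ii) is instead a direct application of \eqref{Eq:Axiom4}. If $F_j=A_j$ for some $j$, send $x$ to any $a\in A_j$, so that $h[\Sigma(x)]\subseteq A_j=F_j\subseteq F$; since $\Sigma(x)\vdash_{\mathcal{H}^r}y$, every value $h(y)$ must lie in $F$, whence $F=A$. Suszko-reducedness then collapses $\langle\A,F\rangle$ to $\langle\mathbf{1},\{1\}\rangle$. This is where the antitheorem hypothesis actually does its work, and it cannot be replaced by the bookkeeping with $\ast$ that you attempt.
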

\begin{proof}
(i) Since $\langle\A,F\rangle\in\ModS(\vdash_{\mathcal{H}^{r}})$, $\A\in\Alg(\vdash_{\mathcal{H}^{r}})$. Moreover, observe that $\ast $ is a $r$-partition function for $\vdash_{\mathcal{H}^{r}}$ (thanks to conditions \eqref{Eq:Axiom1}, \eqref{Eq:Axiom2}, \eqref{Eq:Axiom5}). These facts, together with Theorem \ref{th: plonka sum of r- matrices}, implies that $\langle\A,F\rangle=\PL(X)$, where $X=\langle\{ \langle \A_{i},F_{i}\rangle\}_{i\in I}, \{ f_{ij} \! : \! i \leq j \}, \langle I,\leq\rangle\rangle$ is an r-direct system of matrices.\\
\noindent
(ii) Suppose that, for some $j\in I$, $\langle\A_{j},F_{j}\rangle$ is a trivial fiber of $\langle\A,F\rangle$, i.e. $F_j = A_j$. Since $\Sigma(x)$ is an anti-theorem (for $\vdash$) and \eqref{Eq:Axiom4}  is a rule of $\mathcal{H}^{r}$, then, for every $i\in I$, we have $A_{i}=F_{i}$, i.e. each fiber is trivial. Indeed, if there exists a non trivial fiber $\langle\A_{k},F_{k}\rangle$ and an  element $c\in A_{k}\smallsetminus F_{k}$, then the evaluation $h\colon\Fm\to\A$, defined as $h(x)=a$, $h(y)=c$ (for an arbitrary $a\in\A_{j}$) is such that $h[\Sigma(x)]\subseteq F$ while $h(y)\notin F$, against the fact that $\Sigma(x)\vdash_{\mathcal{H}^{r}}y$. Moreover, the facts that each fiber is trivial and that $\Tarski^{\A}F=id$ immediately implies $\A=\mathbf{1}$.
\end{proof}

 \begin{theorem}\label{th: completezza calcolo Hilbert}
 Let $\vdash$ be a finitary logic with a $r$-partition function $\ast$ and an antitheorem $\Sigma(x)$. Let, moreover, $\mathcal{H}$ be a Hilbert style calculus  with finite rules. If $\mathcal{H}$ is complete for $\vdash$, then $\mathcal{H}^{r}$ is complete for $\vdash^{r}$.
 \end{theorem}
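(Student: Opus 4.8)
The plan is to establish the two inclusions $\vdash_{\mathcal{H}^{r}}\,\subseteq\,\vdash^{r}$ (soundness of the calculus) and $\vdash^{r}\,\subseteq\,\vdash_{\mathcal{H}^{r}}$ (its completeness) separately. The first is a routine rule-by-rule check; the second is the substantive half.

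For soundness it suffices to verify that each of \eqref{Eq:Axiom0}--\eqref{Eq:Axiom5} is a valid inference of $\vdash^{r}$: indeed $\vdash^{r}$ is substitution invariant and, by Lemma~\ref{lemma: finitarieta di r}, finitary, and $\vdash_{\mathcal{H}^{r}}$ is the least consequence relation closed under all instances of these (schematic, finite) rules, so once $\vdash^{r}$ is closed under them we get $\vdash_{\mathcal{H}^{r}}\,\subseteq\,\vdash^{r}$. Rules \eqref{Eq:Axiom1}, \eqref{Eq:Axiom2} are clauses (i), (ii) in the definition of $r$-partition function and their conclusions' variables lie trivially inside the premises'; \eqref{Eq:Axiom4} holds because $\Sigma(x)$ is an antitheorem of $\vdash$, hence of $\vdash^{r}$, so $\Sigma(x)\vdash^{r}\alpha$ for every $\alpha$; for \eqref{Eq:Axiom0}, an $\mathcal{H}$-axiom $\varphi$ is a theorem of $\vdash$, so $x\ast\varphi\vdash\varphi$ by monotonicity while $\Var(\varphi)\subseteq\Var(x\ast\varphi)$; for \eqref{Eq:Axiom3}, from the $\mathcal{H}$-rule $\gamma_{1},\dots,\gamma_{n}\vdash\psi$ together with $\gamma_{i}\ast\psi\vdash\gamma_{i}$ (clause (ii)) one derives $\{\gamma_{1},\dots,\gamma_{n}\}\smallsetminus\{\gamma_{i}\},\gamma_{i}\ast\psi\vdash\psi$, and $\Var(\psi)\subseteq\Var(\gamma_{i}\ast\psi)$; and for \eqref{Eq:Axiom5}, clause (iii) (equivalently Lemma~\ref{lemma su equazioni p-function}) gives $\chi(\delta,\vec{z})\sineq\chi(\varepsilon,\vec{z})$ in $\vdash$, while — every equation in Definition~\ref{def: partition function} being regular, i.e. carrying the same variables on both sides — one has $\Var(\chi(\delta,\vec{z}))=\Var(\chi(\varepsilon,\vec{z}))$, so $\chi(\delta,\vec{z})\sineq^{r}\chi(\varepsilon,\vec{z})$.

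For completeness, take $\Gamma\vdash^{r}\varphi$; by Lemma~\ref{lemma: finitarieta di r} we may assume $\Gamma$ finite. If $\Sigma(x)\subseteq\Gamma$, rule \eqref{Eq:Axiom4} gives $\Gamma\vdash_{\mathcal{H}^{r}}\varphi$ at once, so assume instead $\Gamma\vdash\varphi$ with $\Var(\varphi)\subseteq\Var(\Gamma)$. Since $\mathcal{H}$ is complete for $\vdash$, fix an $\mathcal{H}$-derivation $\delta_{1},\dots,\delta_{m}=\varphi$ of $\varphi$ from $\Gamma$. First I would \emph{normalise the variables} of this derivation: applying the substitution $\rho$ that fixes $\Var(\Gamma)$ and collapses every other variable to a fixed $v_{0}\in\Var(\Gamma)$ (non-empty, because the constant-free formula $\varphi$ contains a variable), substitution invariance of $\mathcal{H}$ turns $\rho(\delta_{1}),\dots,\rho(\delta_{m})=\varphi$ into an $\mathcal{H}$-derivation of $\varphi$ from $\rho[\Gamma]=\Gamma$ in which every $\Var(\rho(\delta_{k}))\subseteq\Var(\Gamma)$; so we may assume $\Var(\delta_{k})\subseteq\Var(\Gamma)$ throughout. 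Writing $\Theta$ for the $\ast$-fold of the finite set $\Gamma$ — so $\Var(\Theta)=\Var(\Gamma)$ and $\Gamma\vdash_{\mathcal{H}^{r}}\Theta$ by \eqref{Eq:Axiom1} — I would then prove, by induction on $k$, that $\Gamma\vdash_{\mathcal{H}^{r}}\delta_{k}\ast\Theta$, a formula whose variables are exactly $\Var(\Gamma)$, so the claim is consistent with the soundness already proved. In the inductive step: \eqref{Eq:Axiom1} settles the case $\delta_{k}\in\Gamma$; \eqref{Eq:Axiom0}, together with \eqref{Eq:Axiom1} and the partition-function rewriting provided by \eqref{Eq:Axiom5}, handles the case where $\delta_{k}$ is an instance of an $\mathcal{H}$-axiom; and \eqref{Eq:Axiom3} applied to a suitably chosen premise, again followed by \eqref{Eq:Axiom5}-normalisation of the resulting $\ast$-terms, handles the case where $\delta_{k}$ is obtained by an $\mathcal{H}$-rule. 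For $k=m$ this gives $\Gamma\vdash_{\mathcal{H}^{r}}\varphi\ast\Theta$, and \eqref{Eq:Axiom2} then yields $\Gamma\vdash_{\mathcal{H}^{r}}\varphi$.

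The hard part will be that inductive step. Because $\ast$ is not commutative and because \eqref{Eq:Axiom0} and \eqref{Eq:Axiom3} introduce the $\ast$ on a fixed side, one must lean on the identities P1--P5 of Definition~\ref{def: partition function} (available through \eqref{Eq:Axiom5}) to push the $\ast\Theta$-contamination down to the variables, to re-associate and re-order $\ast$-terms, and to absorb repeated variables; and one must check that at no intermediate stage does a formula acquire a variable outside $\Var(\Gamma)$, which is exactly what keeps the construction inside the containment constraint — this bookkeeping is where the proof really lives. The same content can be packaged semantically: $\vdash_{\mathcal{H}^{r}}$ is complete with respect to $\ModS(\vdash_{\mathcal{H}^{r}})$, by Lemma~\ref{lemma: spezza dimostrazione completezza Hilbert} each such model is either the trivial algebra or a P\l onka sum $\PL(X)$ over an $r$-direct system with non-trivial fibers, and one has to show that the fibers indexed in $I^{+}$ are models of $\vdash$ — this is the argument of Lemma~\ref{lemma su filtri non vuoti che sono modelli della logica iniziale}, with \eqref{Eq:Axiom2} and \eqref{Eq:Axiom3} supplying the needed $\vdash_{\mathcal{H}^{r}}$-inferences — after which Lemma~\ref{lemma: soundness} shows $\PL(X)$ is a model of $\vdash^{r}$, whence $\vdash^{r}\subseteq\vdash_{\mathcal{H}^{r}}$.
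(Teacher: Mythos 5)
Your closing paragraph is, in substance, exactly the paper's proof: soundness by a rule-by-rule check, and for the converse the chain ``$\vdash_{\mathcal{H}^{r}}$ is complete w.r.t.\ $\ModS(\vdash_{\mathcal{H}^{r}})$, Lemma \ref{lemma: spezza dimostrazione completezza Hilbert} gives the P\l onka decomposition and the trivial-algebra dichotomy, the fibers over $I^{+}$ are models of $\vdash$ by adapting Lemma \ref{lemma su filtri non vuoti che sono modelli della logica iniziale} to $\mathcal{H}^{r}$, and Lemma \ref{lemma: soundness} then puts $\PL(X)$ in $\Mod(\vdash^{r})$.'' Two small corrections there: it is \eqref{Eq:Axiom0} and \eqref{Eq:Axiom3}, not \eqref{Eq:Axiom2} and \eqref{Eq:Axiom3}, that drive the fiber argument --- inside a single fiber $a\ast b=a$, so for an axiom instance one evaluates $x$ to an element of $F_{j}$ and \eqref{Eq:Axiom0} forces the axiom's value into $F_{j}$, while \eqref{Eq:Axiom2} cannot do this (the roles of \eqref{Eq:Axiom1}, \eqref{Eq:Axiom2}, \eqref{Eq:Axiom5} are to make $\ast$ an $r$-partition function for $\vdash_{\mathcal{H}^{r}}$, i.e.\ Lemma \ref{lemma: spezza dimostrazione completezza Hilbert}(i)); and in the case $\A=\mathbf{1}$ you should say explicitly that $\langle\mathbf{1},\emptyset\rangle$ and $\langle\mathbf{1},\{1\}\rangle$ are models of $\vdash^{r}$ because $\vdash^{r}$ is theoremless.

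What you present as the primary, syntactic route is not in acceptable shape, and the difficulty you defer as ``bookkeeping'' is in fact the crux. In the inductive step for a rule $\gamma_{1},\dots,\gamma_{n}\rhd\psi$ applied with substitution $\sigma$, rule \eqref{Eq:Axiom3} requires the starred premise $\sigma(\gamma_{i})\ast\sigma(\psi)$, and nothing in your sketch produces it: from the induction hypothesis you only get the formulas $\sigma(\gamma_{l})$ and things of the form $\delta\ast\Theta$, whose $\ast$-tails flatten (via \eqref{Eq:Axiom5} and P5) to \emph{all} the variables of $\Gamma$; since \eqref{Eq:Axiom5} only gives you the regular identities P1--P5 of Definition \ref{def: partition function} (and their equational consequences), tail variables can be permuted, contracted and added-when-already-present, but never deleted, so $\sigma(\gamma_{i})\ast\Theta$ cannot be rewritten into $\sigma(\gamma_{i})\ast\sigma(\psi)$ when $\Var(\sigma(\psi))\subsetneq\Var(\Gamma)$, and deriving $\sigma(\gamma_{i})\ast\sigma(\psi)$ by \eqref{Eq:Axiom1} would presuppose $\sigma(\psi)$, which is circular. (Note also that the rules of $\mathcal{H}$ need not satisfy any variable-inclusion constraint, so you cannot assume $\Var(\psi)\subseteq\Var(\gamma_{1},\dots,\gamma_{n})$.) So either carry out that step with a genuinely new idea, or drop the syntactic plan and state the semantic argument --- which you already have, and which is the paper's --- as the actual proof.
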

 
 \begin{proof}
 
 Let us denote with $\vdash_{\mathcal{H}^{r}}$ the logic defined by $\mathcal{H}^{r}$. We show that $\vdash_{\mathcal{H}^{r}}\?=\?\vdash^{r}$.
 
 $(\subseteq)$. It is immediate to check that every rule of $\mathcal{H}^{r}$ holds in $\vdash^{r}$.  
 
 $(\supseteq)$. We now show that $\ModS(\vdash_{\mathcal{H}^{r}})\subseteq\Mod(\vdash^{r})$. So let $\langle\A,F\rangle\in\ModS(\vdash_{\mathcal{H}^{r}})$. By Lemma \ref{lemma: spezza dimostrazione completezza Hilbert}-(i), we know that $\langle\A,F\rangle\cong\PL(X)$, where $X=\langle\{ \langle \A_{i},F_{i}\rangle\}_{i\in I}, \{ f_{ij} \! : \! i \leq j \}, \langle I,\leq\rangle\rangle$ is an r-direct system of matrices.
 
The fact that the matrix $\langle\A_{i},F_{i}\rangle\in\Mod(\vdash_{\mathcal{H}})$ for each $i\in I^{+}$ can be proved on the ground of \eqref{Eq:Axiom0} and \eqref{Eq:Axiom3} by adapting the proof of Lemma \ref{lemma su filtri non vuoti che sono modelli della logica iniziale} to the calculus $\mathcal{H}^{r}$. Recalling that $\mathcal{H}$ is complete for $\vdash$ we obtain that $\langle\A_{i},F_{i}\rangle\in\Mod(\vdash)$, for each $i\in I^{+}$. 

%
%
%

\noindent 

Moreover, by Lemma \ref{lemma: spezza dimostrazione completezza Hilbert}-(ii), we know that if $X$ contains a trivial matrix $\pair{\A_j, F_j}$, then $\A = \mathbf{1}$. 

Therefore, two cases may arise: (1) $\A=\mathbf{1}$, (2) $X$ contains no trivial fibers.
If (1) then clearly $\langle\A,F\rangle\in\{\langle\mathbf{1},\emptyset\rangle, \langle\mathbf{1},\{1\}\rangle \}$. As $\vdash^{r}$ is a theoremless logic 
$\{\langle\mathbf{1},\emptyset\rangle, \langle\mathbf{1},1\rangle \}\subseteq \Mod(\vdash^{r})$. If (2), then we can apply Lemma \ref{lemma: soundness}, so $\pair{\A, F}=\PL(X)\in\Mod(\vdash^{r})$. 
 \end{proof}

\begin{remark}\label{rem: Hilbert senza inconsistency terms}
It is easy to check that, if the logic $\vdash$ does not possess antitheorems, then a Hilbert-style calculus for $\vdash^{r}$ can be defined by simply dropping condition (\ref{Eq:Axiom4}) from Definition \ref{def: hilbert calc per r}. The completeness of $\vdash^{r}$ with respect to such calculus can be proven by adapting the strategy in the proof of Theorem \ref{th: completezza calcolo Hilbert}.
\end{remark}

\section{Examples of axiomatizations}\label{sec: examples}

In this last section, we show how to obtain Hilbert-style axiomatizations of some containment logics.  

\subsection{Bochvar logic}

Bochvar logic is the containment companion of classical logic. Consider the following Hilbert-style axiomatization of classical propositional logic:
\benroman
\item[($\mathbf{H}_{1}$)] $\rhd\varphi\to\varphi$
\item [($\mathbf{H}_{2}$)]$\rhd\varphi\to(\psi\to\varphi)$
\item [($\mathbf{H}_{3}$)]$\rhd\varphi\to(\psi\to\chi)\to(\varphi\to\psi)\to(\varphi\to\chi)$
\item [($\mathbf{H}_{4}$)]$\rhd(\neg\varphi\to\neg\psi)\to(\psi\to\varphi)$
\item [($\mathbf{H}_{5}$)]$\varphi,\varphi\to\psi\rhd\varphi$
\eroman

 Theorem \ref{th: completezza calcolo Hilbert} allows to provide the following complete Hilbert style calculus for Bochvar logic $\mathsf{B_{3}}$.

\begin{itemize}
\item[($\mathbf{H}^{r}_{1}$)] $\alpha\ast(\varphi\to\varphi)\rhd\varphi\to\varphi$
\item [($\mathbf{H}^{r}_{2}$)]$\alpha\ast(\varphi\to(\psi\to\varphi)\rhd\varphi\to(\psi\to\varphi)$
\item [($\mathbf{H}^{r}_{3}$)]$\alpha\ast(\varphi\to(\psi\to\chi)\to(\varphi\to\psi)\to(\varphi\to\chi))\rhd\varphi\to(\psi\to\chi)\to(\varphi\to\psi)\to(\varphi\to\chi)$
\item[($\mathbf{H}^{r}_{4}$)] $\alpha\ast((\neg\varphi\to\neg\psi)\to(\psi\to\varphi))\rhd(\neg\varphi\to\neg\psi)\to(\psi\to\varphi)$
\item [($\mathbf{H}^{r}_{5}$)]$\varphi\ast\psi,\varphi\to\psi\rhd\psi$
\item [($\mathbf{H}^{r}_{6}$)] $\alpha,\neg \alpha\rhd\varphi$
\item[($\mathbf{H}^{r}_{7}$)] $\chi(\delta, \vec{z}\?\?) \?\?\?\lhd\rhd\chi(\varepsilon, \vec{z}\?\?)$ for every formula $\chi(x,\vec{z})$ and equation $\delta\thickapprox\varepsilon$ in  Definition \ref{def: partition function},
\end{itemize}

where $\varphi\ast\psi$ is an abbreviation for $\varphi\land(\varphi\lor\psi)$.

\subsection{Belnap-Dunn logic}

Belnap-Dunn is a four-valued logic $\mathsf{B}$, originally introduced as \emph{First Degree Entailment} within the research enterprise on relevance and entailment logic \cite{AndersonBelnap,Belnap1977}.   

Consider the algebraic language of type $1,2,2$, containing $\neg,\vee,\land$. Recall that a \emph{De Morgan lattice} is an algebra $\A=\pair{A,\neg,\vee,\land}$ of type $1,2,2$ such that: 
\benroman
\item $\pair{A,\land,\vee}$ is a distributive lattice; 
\item $\neg $ satisfies the following equations: 
\[
x\approx\neg\neg x, \hspace{1cm} \neg(x\land y)\approx \neg x\lor\neg y, \hspace{1cm} \neg(x\lor y)\approx \neg x\land\neg y.
\]
\eroman

De Morgan lattices, originally introduced by Moisil \cite{Moisil} and, independently, by Kalman \cite{Kalman58} (under the name of \emph{distributive i-lattices}) form a variety, which is generated by the four element algebra $\M=\pair{\{0,b,n,1\},\neg,\vee,\land}$, whose lattice reduct is displayed in Figure \ref{fig: M4} and negation in the following table:
\begin{center}\renewcommand{\arraystretch}{1.25}
\begin{tabular}{>{$}c<{$}|>{$}c<{$}}
  \lnot &  \\[.2ex]
\hline
  1 & 0 \\
  b & b \\
  n & n \\
  0 & 1 \\
\end{tabular}
\end{center}

\begin{center}
\begin{figure}[h]
\begin{tikzcd}[row sep = tiny, arrows = {dash}]
 & 1 & \\
 & & \\
 & & \\
 b \arrow[uuur, dash] & & n \arrow[uuul] \\
 & & \\
 & & \\
 & 0\arrow[uuul]\arrow[uuur] &
  \end{tikzcd}\caption{Hasse
  diagram of the De Morgan lattice $\M$.}
 \end{figure}\label{fig: M4}
 
  \end{center}

$\mathsf{B}$ is the logic induced by the matrix $\pair{\M,\{1,b\}}$ (or, equivalently, by $\pair{\M,\{1,n\}}$, see \cite[Proposition 2.3]{font1997belnap}). $\mathsf{B}$ is finitary and theoremless (purely inferential). Moreover, the class $\Alg(\mathsf{B})$ coincides with the variety of De Morgan lattices \cite[Theorem 4.1]{font1997belnap}. Observe that the set $\{\varphi,\neg\varphi\}$ is not an antitheorem of $\mathsf{B}$ (indeed $\varphi,\neg\varphi\not\vdash_{\mathsf{B}}\psi$). It is not difficult to check that $\mathsf{B}$ does not possess antitheorems.  

Recall that a \emph{lattice filter} of a De Morgan lattice $\A$ is a subset $F\subseteq A$ such that $x\land y\in F$ if and only if $x\in F$ and $y\in F$. The class of matrices 
\[
\mathsf{M}=\{\pair{\A, F} : \; \A \text{ De Morgan algebra, } F\subseteq A \text{ lattice filter}\}
\]

is complete for $\mathsf{B}$ \cite[Corollary 2.6]{font1997belnap}. Observe that the matrices $\pair{\A,\emptyset}\in\mathsf{M}$, for any De Morgan lattice $\A$. It follows from our analysis (see Theorem \ref{completeness}) that the containment companion $\vdash^{r}_{\mathsf{B}}$ of $\vdash_{\mathsf{B}}$ is complete with respect to $\PL{(\mathsf{M})}$, i.e. the class of all P\l onka sums over r-direct systems of matrices in $\mathsf{M}$.

We present the Hilbert-style axiomatization for $\mathsf{B}$ which is introduced in \cite{font1997belnap} (and, independently in \cite{Pynko95}). Since $\mathsf{B}$ is theoremless, the calculus has no axioms and the following rules: 

\benroman
\item[($\mathsf{B}_1$)] $\varphi \land \psi \rhd \varphi $;
\item [($\mathsf{B}_{2}$)] $\varphi \land \psi \rhd \psi$;
\item [($\mathsf{B}_{3}$)]$\varphi, \psi\rhd \varphi  \land \psi $;
\item [($\mathsf{B}_{4}$)]$\varphi \rhd \varphi\lor \psi$;
\item [($\mathsf{B}_{5}$)]$\varphi\lor \psi\rhd \psi\lor \varphi$;
\item [($\mathsf{B}_{6}$)]$\varphi\lor \varphi \rhd \varphi$;
\item [($\mathsf{B}_{7}$)]$\varphi\lor (\psi\lor\chi) \rhd (\varphi\lor\psi)\lor\chi$;
\item [($\mathsf{B}_{8}$)]$\varphi\lor (\psi\land\chi) \rhd (\varphi\lor\psi)\land(\varphi\lor\chi)$;
\item [($\mathsf{B}_{9}$)]$(\varphi\lor\psi)\land(\varphi\lor\chi) \rhd  \varphi\lor (\psi\land\chi)$;
\item [($\mathsf{B}_{10}$)]$\varphi\lor\psi  \rhd \neg\neg\varphi\lor\psi$;
\item [($\mathsf{B}_{11}$)]$\neg\neg\varphi\lor\psi  \rhd \varphi\lor\psi$
\item [($\mathsf{B}_{12}$)]$\neg(\varphi\lor\psi)\lor\chi  \rhd (\neg\varphi\land\neg \psi)\lor\chi$;
\item [($\mathsf{B}_{13}$)]$ (\neg\varphi\land\neg \psi)\lor\chi  \rhd \neg(\varphi\lor\psi)\lor\chi$;
\item [($\mathsf{B}_{14}$)]$\neg(\varphi\land\psi)\lor\chi  \rhd (\neg\varphi\lor\neg \psi)\lor\chi$;
\item [($\mathsf{B}_{15}$)]$ (\neg\varphi\lor\neg \psi)\lor\chi  \rhd \neg(\varphi\land\psi)\lor\chi$;
\eroman

A Hilbert-style axiomatization of $\vdash^{r}_{\mathsf{B}}$, (see Definition \ref{def: hilbert calc per r} Theorem \ref{th: completezza calcolo Hilbert}) is given by the following ($\varphi\ast\psi$ is an abbreviation for $ \varphi\land(\varphi\lor\psi)$): 

\benroman
\item[($\mathsf{B}^{r}_{1}$)] $\varphi,\psi \rhd \varphi \ast \psi $;
\item[($\mathsf{B}^{r}_{2}$)] $\varphi\ast \psi \rhd \psi $;
\item[($\mathsf{B}^{r}_{3}$)] $(\varphi \land \psi)\ast\varphi \rhd \varphi $;
\item [($\mathsf{B}^{r}_{4}$)] $(\varphi \land \psi)\ast\psi \rhd \psi$;
\item [($\mathsf{B}^{r}_{5}$)]$\varphi\ast (\varphi  \land \psi) , \psi\rhd \varphi  \land \psi $;
\item [($\mathsf{B}^{r}_{6}$)]$\varphi, \psi\ast(\varphi  \land \psi)\rhd \varphi  \land \psi $;
\item [($\mathsf{B}^{r}_{7}$)]$\varphi\ast(\varphi\lor \psi) \rhd \varphi\lor \psi$;
\item [($\mathsf{B}^{r}_{8}$)]$(\varphi\lor \psi)\ast (\psi\lor \varphi)\rhd \psi\lor \varphi$;
\item [($\mathsf{B}^{r}_{9}$)]$(\varphi\lor \varphi)\ast\varphi \rhd \varphi$;

\item [($\mathsf{B}^{r}_{10}$)]$(\varphi\lor (\psi\lor\chi))\ast ((\varphi\lor\psi)\lor\chi) \rhd (\varphi\lor\psi)\lor\chi$;

\item [($\mathsf{B}^{r}_{11}$)]$\varphi\lor (\psi\land\chi)\ast((\varphi\lor\psi)\land(\varphi\lor\chi)) \rhd (\varphi\lor\psi)\land(\varphi\lor\chi)$;

\item [($\mathsf{B}^{r}_{12}$)]$((\varphi\lor\psi)\land(\varphi\lor\chi))\ast(\varphi\lor (\psi\land\chi)) \rhd  \varphi\lor (\psi\land\chi)$;

\item [($\mathsf{B}^{r}_{13}$)]$(\varphi\lor\psi)\ast (\neg\neg\varphi\lor\psi)  \rhd \neg\neg\varphi\lor\psi$;

\item [($\mathsf{B}^{r}_{14}$)]$(\neg\neg\varphi\lor\psi)\ast (\varphi\lor\psi)\rhd \varphi\lor\psi$;

\item [($\mathsf{B}^{r}_{15}$)]$(\neg(\varphi\lor\psi)\lor\chi)\ast ((\neg\varphi\land\neg \psi)\lor\chi)  \rhd (\neg\varphi\land\neg \psi)\lor\chi$;

\item [($\mathsf{B}^{r}_{16}$)]$ ((\neg\varphi\land\neg \psi)\lor\chi) \ast (\neg(\varphi\lor\psi)\lor\chi)  \rhd \neg(\varphi\lor\psi)\lor\chi$;

\item [($\mathsf{B}^{r}_{17}$)]$(\neg(\varphi\land\psi)\lor\chi)\ast ((\neg\varphi\lor\neg \psi)\lor\chi)  \rhd (\neg\varphi\lor\neg \psi)\lor\chi$;

\item [($\mathsf{B}^{r}_{18}$)]$ ((\neg\varphi\lor\neg \psi)\lor\chi)\ast (\neg(\varphi\land\psi)\lor\chi)  \rhd \neg(\varphi\land\psi)\lor\chi$;

\item[($\mathsf{B}^{r}_{19}$)] $\chi(\delta, \vec{z}\?\?) \?\?\?\lhd\rhd\chi(\varepsilon, \vec{z}\?\?)$ for every formula $\chi(x,\vec{z})$ and equation $\delta\thickapprox\varepsilon$ in  Definition \ref{def: partition function}.
\eroman

\subsection{The Relevance logic $\mathrm{\mathbf{S}_{fde}}$}\label{Subsec: relevance}

The logic $\mathrm{\mathbf{S}_{fde}}$ has been introduced by Deutsch \cite{Deutsch}: it is induced the matrix $\pair{\mathbf{S}_{4}, \{1,\ant\}}$, whose algebraic reduct $\mathbf{S}_{4}=\pair{\{0,\ant,m,1\}, \neg, \land, \vee}$ is given in the following tables.
\vspace{5pt}
\begin{center}\renewcommand{\arraystretch}{1.25}
\begin{tabular}{>{$}c<{$}|>{$}c<{$}}
  \lnot &  \\[.2ex]
\hline
  1 & 0 \\
  \ant & \ant \\
  m & m \\
  0 & 1 \\
\end{tabular}
\qquad
\begin{tabular}{>{$}c<{$}|>{$}c<{$}>{$}c<{$}>{$}c<{$}>{$}c<{$}}
   \land & 0 & \ant & m & 1 \\[.2ex]
 \hline
       0 & 0 & 0 & m &  0 \\
       \ant & 0 & \ant & m & \ant \\
       m & m & m & m & m \\          
       1 & 0 & \ant & m & 1 
\end{tabular}
\qquad
\begin{tabular}{>{$}c<{$}|>{$}c<{$}>{$}c<{$}>{$}c<{$}>{$}c<{$}}
   \lor & 0 & \ant & m & 1 \\[.2ex]
 \hline
       0 & 0 & \ant & m &  1 \\
       \ant & \ant & \ant & m & 1 \\
       m & m & m & m & m \\          
       1 & 1 & 1 & m & 1 
\end{tabular}

\end{center}
\vspace{10pt}
\noindent
Recall that a \emph{Kleene lattice} is a De Morgan lattice satisfying $x\land\neg x\leq y\lor\neg y$. Kleene lattices form a variety ($\mathsf{KL}$), generated by the 3-element algebra $\mathbf{SK}=\pair{\{0,1,\ant\}, \neg, \vee, \land}$, which is a subalgebra of $\mathbf{S}_{4}$ (and also isomorphic to the two three-element subalgebras of $\M$).

The logic of Paradox $\mathsf{LP}$ (see \cite{Priestfirst,BlRiVe01,Pynko}) is defined by the matrix $\langle\mathbf{SK},\{1,\ant\}\rangle$. The algebraic counterpart of $\mathsf{LP}$ is exactly the variety of Kleene lattice, i.e. $\mathsf{KL}=\Alg(\mathsf{LP})$. 
It is immediate to check that the matrix $\pair{\mathbf{S}_{4}, \{1,\ant\}}$ is the P\l onka sum over the r-direct system of the two matrices $\pair{\mathbf{SK},\{1,\ant\}}$ and $\pair{\mathbf{m},\emptyset}$. Thus, it follows from Theorem \ref{completeness} that $\mathrm{\mathbf{S}_{fde}}$ is the containment companion of the logic of Paradox. 

A finite Hilbert style calculus for $\mathsf{LP}$ (see for instance \cite{Superbelnap}) can be obtained by adding the axiom

\benroman
\item[$(\mathsf{LP}_{1})$] $\rhd \varphi\lor\neg\varphi$
\eroman
to the calculus for the logic $\mathsf{B}$ described above. Therefore, by Theorem \ref{th: completezza calcolo Hilbert}, the calculus consisting of $(\mathsf{B}_1^r)-(\mathsf{B}_{19}^r)$ and
\benroman
\item [$(\mathsf{LP}^r_{1})$] $\alpha\ast \varphi\lor\neg\varphi\rhd \varphi\lor\neg\varphi $
 
\eroman

is complete for $\mathrm{\mathbf{S}_{fde}}$.

%

\begin{thebibliography}{10}

\bibitem{Superbelnap}
H.~Albuquerque, A.~P\v{r}enosil, and U.~Rivieccio.
\newblock An {A}lgebraic {V}iew of {S}uper-{B}elnap {L}ogics.
\newblock {\em Studia Logica}, 105:1051--1086, 2017.

\bibitem{AndersonBelnap}
A.~Anderson and N.~Belnap.
\newblock Tautological entailments.
\newblock {\em Philosophical studies}, 13:9--24, 1961.

\bibitem{beall2016off}
J.~Beall.
\newblock Off-topic: A new interpretation of weak-kleene logic.
\newblock {\em The Australasian Journal of Logic}, 13(6), 2016.

\bibitem{BelikovPetrukhin}
A.~Belikov and Y.~Petrukhin.
\newblock Exactly true and non-falsity logics meeting infectious ones.
\newblock {\em Journal of Applied Non-Classical Logics}, pages 1--29, 2020.
\newblock On-line first.

\bibitem{Belnap1977}
N.~Belnap.
\newblock {\em A Useful Four-Valued Logic}, pages 5--37.
\newblock Springer Netherlands, Dordrecht, 1977.

\bibitem{Be11g}
C.~Bergman.
\newblock {\em Universal Algebra: Fundamentals and Selected Topics}.
\newblock Chapman \&; Hall Pure and Applied Mathematics. Chapman and Hall/CRC,
  2011.

\bibitem{BlRiVe01}
P.~Blackburn, M.~de~Rijke, and Y.~Venema.
\newblock {\em Modal logic}.
\newblock Number~53 in Cambridge Tracts in Theoretical Computer Science.
  Cambridge University Press, Cambridge, 2001.

\bibitem{BochvarBergmann}
D.~Bochvar and M.~Bergmann.
\newblock On a three-valued logical calculus and its application to the
  analysis of the paradoxes of the classical extended functional calculus.
\newblock {\em History and Philosophy of Logic}, 2(1-2):87--112, 1981.
\newblock Translation of the original in Russian (Mathematicheskii Sbornik,
  1938).

\bibitem{Bonzio16}
S.~Bonzio, J.~Gil-F\'erez, F.~Paoli, and L.~Peruzzi.
\newblock On {P}araconsistent {W}eak {K}leene {L}ogic: axiomatization and
  algebraic analysis.
\newblock {\em Studia Logica}, 105(2):253--297, 2017.

\bibitem{BonzioMorascoPrabaldi}
S.~Bonzio, T.~Moraschini, and M.~Pra~Baldi.
\newblock Logics of left variable inclusion and {P}\l onka sums of matrices.
\newblock {\em Archive for {M}athematical {L}ogic}, 2020.
\newblock To appear.

\bibitem{BuSa00}
S.~Burris and H.~P. Sankappanavar.
\newblock {\em A course in {U}niversal {A}lgebra}.
\newblock Available in internet
  \url{https://www.math.uwaterloo.ca/~snburris/htdocs/ualg.html}, the
  millennium edition, 2012.

\bibitem{CALEIROfinite}
C.~Caleiro, S.~Marcelino, and U.~Rivieccio.
\newblock Characterizing finite-valuedness.
\newblock {\em Fuzzy Sets and Systems}, 345:113 -- 125, 2018.
\newblock Theme : Logic.

\bibitem{CampercholiRaftery}
M.~A. Campercholi and J.~G. Raftery.
\newblock Relative congruence formulas and decompositions in quasivarieties.
\newblock {\em Algebra universalis}, 78(3):407--425, 2017.

\bibitem{CiuniCarrara}
R.~Ciuni and M.~Carrara.
\newblock Characterizing logical consequence in paraconsistent weak kleene.
\newblock In L.~Felline, A.~Ledda, F.~Paoli, and E.~Rossanese, editors, {\em
  New Developments in Logic and the Philosophy of Science}. College, London,
  2016.

\bibitem{Ciuni1}
R.~Ciuni, T.~Ferguson, and D.~Szmuc.
\newblock Relevant logics obeying component homogeneity.
\newblock {\em Australasian Journal of Logic}, 15(2):301--361, 2018.

\bibitem{ciunilast}
R.~Ciuni, T.~Macaulay~Ferguson, and D.~Szmuc.
\newblock {Logics based on linear orders of contaminating values}.
\newblock {\em Journal of Logic and Computation}, 29(5):631--663, 06 2019.

\bibitem{Deutsch}
H.~Deutsch.
\newblock Relevant analytic entailment.
\newblock {\em The {R}elevance {L}ogic {N}ewsletter}, 1(2):26--44, 1977.

\bibitem{Di65}
A.~Diego.
\newblock {\em Sobre {\'a}lgebras de {H}ilbert}, volume~12 of {\em Notas de
  L{\'o}gica Matem{\'a}tica}.
\newblock Universidad Nacional del Sur, Bah{\'\i}a Blanca (Argentina), 1965.

\bibitem{epstein1990semantic}
R.~L. Epstein.
\newblock The semantic foundations of logic.
\newblock In {\em The Semantic Foundations of Logic Volume 1: Propositional
  Logics}, pages 315--321. Springer, 1990.

\bibitem{Ferguson}
T.~Ferguson.
\newblock A computational interpretation of conceptivism.
\newblock {\em Journal of Applied Non-Classical Logics}, 24(4):333--367, 2014.

\bibitem{ferguson2017meaning}
T.~Ferguson.
\newblock {\em Meaning and Proscription in Formal Logic: Variations on the
  Propositional Logic of William T. Parry}.
\newblock Trends in Logic. Springer International Publishing, 2017.

\bibitem{FinnGrigolia}
V.~Finn and R.~Grigolia.
\newblock Nonsense logics and their algebraic properties.
\newblock {\em Theoria}, 59(1-3):207--273, 1993.

\bibitem{Font16}
J.~Font.
\newblock {\em Abstract Algebraic Logic: An Introductory Textbook}.
\newblock College Publications, 2016.

\bibitem{font1997belnap}
J.~M. Font.
\newblock Belnap's four-valued logic and de morgan lattices.
\newblock {\em Logic Journal of IGPL}, 5(3):1--29, 1997.

\bibitem{GaJiKoOn07}
N.~Galatos, P.~Jipsen, T.~Kowalski, and H.~Ono.
\newblock {\em Residuated Lattices: an algebraic glimpse at substructural
  logics}, volume 151 of {\em Studies in Logic and the Foundations of
  Mathematics}.
\newblock Elsevier, Amsterdam, 2007.

\bibitem{Hallden}
S.~Halld\'en.
\newblock {\em The Logic of Nonsense}.
\newblock Lundequista Bokhandeln, Uppsala, 1949.

\bibitem{Iseki}
K.~Iseki.
\newblock {BCK}-algebras.
\newblock {\em Mathematical {S}eminar {N}otes}, 4:77--86, 1976.

\bibitem{Kalman58}
J.~Kalman.
\newblock Lattices with involution.
\newblock {\em Transactions of the AMS}, 87:485--491, 1958.

\bibitem{KarpenkoTomova}
A.~Karpenko and N.~Tomova.
\newblock Bochvar's three-valued logic and literal paralogics: Their lattice
  and functional equivalence.
\newblock {\em Logic and Logical Philosophy}, 26(2):207--235, 2016.

\bibitem{ThomasLavtesi}
T.~L\'{a}vi\v{c}ka.
\newblock {\em An {A}bstract {S}tudy of {C}ompleteness in {I}nfinitary
  {L}ogics}.
\newblock PhD Thesis, Charles University, 2018.

\bibitem{ledda2019algebraic}
A.~Ledda, F.~Paoli, and M.~P. Baldi.
\newblock Algebraic analysis of demodalised analytic implication.
\newblock {\em Journal of Philosophical Logic}, pages 1--23, 2019.

\bibitem{Moisil}
G.~Moisil.
\newblock Recherches sur l'alg\'ebre de la logique.
\newblock {\em Annales Scientifiques de l'Universit\`e de Jassy 22}, page
  1–117, 1935.

\bibitem{Parry}
W.~T. Parry.
\newblock {\em Implication}.
\newblock PhD Thesis, Harvard University, 1932.

\bibitem{Petrukhin2}
Y.~Petrukhin.
\newblock Natural deduction for four-valued both regular and monotonic logics.
\newblock {\em Logic and Logical Philosophy}, 27:53--66, 2018.

\bibitem{Plo67}
J.~P\l{}onka.
\newblock On a method of construction of abstract algebras.
\newblock {\em Fundamenta Mathematicae}, 61(2):183--189, 1967.

\bibitem{Plo67a}
J.~P\l{}onka.
\newblock On distributive quasilattices.
\newblock {\em Fundamenta Mathematicae}, 60:191--200, 1967.

\bibitem{plonka1984sum}
J.~P{\l}onka.
\newblock On the sum of a direct system of universal algebras with nullary
  polynomials.
\newblock {\em Algebra universalis}, 19(2):197--207, 1984.

\bibitem{Romanowska92}
J.~P{\l}onka and A.~Romanowska.
\newblock Semilattice sums.
\newblock In A.~Romanowska and J.~D.~H. Smith, editors, {\em Universal Algebra
  and Quasigroup Theory}, pages 123--158. Heldermann, 1992.

\bibitem{prabaldisub}
M.~Pra~Baldi.
\newblock The lattice of sublogics of variable inclusion.
\newblock Submitted manuscript, 2019.

\bibitem{Priestfirst}
G.~Priest.
\newblock The logic of paradox.
\newblock {\em Journal of Philosophical Logic}, 8:219--241, 1979.

\bibitem{Pynko95}
A.~Pynko.
\newblock Characterizing belnap's logic via de morgan's laws.
\newblock {\em Mathematical Logic Quarterly}, 41(4):442--454, 1995.

\bibitem{Pynko}
A.~Pynko.
\newblock On priest's logic of paradox.
\newblock {\em Journal of Applied Non-Classical Logics}, 5(2):219--225, 1995.

\bibitem{JGR13}
J.~G. Raftery.
\newblock Inconsistency lemmas in algebraic logic.
\newblock {\em Mathematical Logic Quarterly}, 59(6):393--406, 2013.

\bibitem{Szmuch2016}
D.~Szmuc.
\newblock Defining lfis and lfus in extensions of infectious logics.
\newblock {\em Journal of Applied Non-Classical Logics}, 26(4):286--314, 2016.

\bibitem{Tomova}
N.~Tomova.
\newblock About four-valued regular logics.
\newblock {\em Logical {I}nvestigations}, 15:223--228, 2009.
\newblock (in Russian).

\bibitem{Urquhart2001}
A.~Urquhart.
\newblock {\em Basic Many-Valued Logic}, pages 249--295.
\newblock Springer Netherlands, Dordrecht, 2001.

\end{thebibliography}
%

\end{document}